\documentclass[12pt]{amsart}
\usepackage{latexsym}
\usepackage{amsmath,amsthm,lipsum}
\usepackage{mathtools}
\usepackage{amssymb,amsfonts,amscd}
\usepackage{mathrsfs}
\usepackage{graphicx}
\usepackage{multirow}
\usepackage{mathabx}
\usepackage{hyperref}
\usepackage{color}
\usepackage{cite}
\usepackage{algorithm}
\usepackage{todonotes}
\usepackage[dvipsnames]{xcolor}
\usepackage{changes}
\usepackage{hhline}
\usepackage{tabularx}
\usepackage{array}
\usepackage{siunitx}
\usepackage{algpseudocodex}
\usepackage{thmtools}
\usepackage{caption,subcaption}
\usepackage{booktabs}
\usepackage{longtable}
\usepackage{placeins}
\usepackage{siunitx}
\usepackage{pgfplots}
\pgfplotsset{compat=1.18}
\usepackage{tikz}

\usepackage{framed}

\usepackage[margin=1in]{geometry}
\usepackage{diagbox}
\usepackage{tikz}
\usepackage{caption}

\theoremstyle{plain} 
    \newtheorem{theorem}{Theorem}[section]
    \newtheorem{proposition}[theorem]{Proposition}

\theoremstyle{definition} 
    
    \newtheorem{example}[theorem]{Example}
    \newtheorem{remark}[theorem]{Remark}

    \newtheorem{problem}[theorem]{Problem}

\renewcommand\qed{{\hspace*{\fill}$\Box$\vskip12pt plus 1pt}}
\renewenvironment{proof}{{\noindent\bf Proof.\ }}{\qed}

\newcommand\suchthat{~|~}

\DeclareMathOperator*{\conj}{\rm conj}

\AtBeginEnvironment{example}{%
  \pushQED{\qed}%
}
\AtEndEnvironment{example}{\hfill $\diamond$\endexample}

\newcommand{\CC}{{\mathbb C}}

\newcommand{\NN}{{\mathbb N}}
\newcommand{\QQ}{{\mathbb Q}}
\newcommand{\RR}{{\mathbb R}}

\newcommand{\PP}{{\mathbb P}}

\usetikzlibrary{decorations.markings,arrows.meta}

\newcommand{\defn}[1]{{\color{blue}{#1}}}

\title[Common Real Secants
to Pairs of Real Twisted Cubic Curves]{Common Real Secants \\ 
to Pairs of Real Twisted Cubic Curves}

\begin{document}

\author[Aslam]{Saima Aslam} 
\address{S.~Aslam\\ 
         Department of Mathematics and Statistics\\ 
         University of Agriculture, Faisalabad\\ 
         Faisalabad\\  
         Pakistan} 
\email{saima.aslam@uaf.edu.pk}
\author[Faust]{Matthew Faust} 
\address{M.~Faust\\ 
         Department of Mathematics\\ 
         Michigan State University\\ 
         East Lansing\\ 
         Michigan \ 48824\\ 
         USA} 
\email{mfaust@msu.edu}
\urladdr{https://mattfaust.github.io/}
\author[Hauenstein]{Jonathan D. Hauenstein} 
\address{J. D.~Hauenstein\\ 
         Department of Applied and Computational Mathematics and Statistics\\ 
         University of Notre Dame\\ 
         Notre Dame\\ 
         Indiana \ 46556\\ 
         USA} 
\email{hauenstein@nd.edu}
\urladdr{https://hauenstein.phd}
\author[Lopez Garcia]{Jordy Lopez Garcia} 
\address{J.~Lopez Garcia\\ 
         Department of Applied and Computational Mathematics and Statistics\\ 
         University of Notre Dame\\ 
         Notre Dame\\ 
         Indiana \ 46556\\ 
         USA} 
\email{jlopezga@nd.edu}
\urladdr{https://jordylopez27.github.io/}
\author[Kagy]{Bryson Kagy} 
\address{B.~Kagy\\ 
         Department of Mathematics\\ 
         Texas State University\\ 
         San Marcos\\ 
         Texas \ 78666\\ 
         USA} 
\email{brysonkagy@gmail.com}
\urladdr{https://brysonkagy.github.io}
\author[Regan]{Margaret H.~Regan} 
\address{M. H.~Regan\\ 
         Department of Mathematics\\ 
         Grand Valley State University\\ 
         Allendale\\ 
         Michigan \ 49401\\ 
         USA} 
\email{reganmar@gvsu.edu}
\urladdr{https://www.margarethregan.com}
\author[Wampler]{Charles W. Wampler} 
\address{C. W.~Wampler\\ 
         Department of Applied and Computational Mathematics and Statistics\\ 
         University of Notre Dame\\ 
         Notre Dame\\ 
         Indiana \ 46556\\ 
         USA} 
\email{cwample1@nd.edu}
\urladdr{https://cwampler.com}
\author[Zhang]{Albert Zhang} 
\address{A.~Zhang\\ 
         Department of Mathematics\\ 
         University of California, Santa Cruz\\ 
         Santa Cruz\\ 
         California \ 95064\\ 
         USA} 
\email{azhang87@ucsc.edu}
\urladdr{https://azhang87.github.io}

\thanks{\texttt{Common Real Secants to Pairs of Real Twisted Cubic Curves} version \texttt{1.0}}

\begin{abstract}
\noindent
It is well established that a general pair of twisted cubic curves in complex 
projective space has ten common secant lines. 
As an initial investigation,
we show that the monodromy group of 
the ten common secant lines 
over the complex numbers
is the full symmetric group demonstrating
that the common secant lines have
no special structure over the complex numbers.
We then investigate a novel question 
in real algebraic geometry: describe the possible collections of ten common secant lines to a pair of real projective twisted cubic curves.
In addition to distinguishing between real and nonreal secant lines, we introduce a refinement of this classification which takes intersection points into account
yielding totally real, partially real, and minimally real secant lines.  Using computational algebraic geometry as well as combinatorics, we show that for each~$k$ between~0 and 10, there exist pairs of real twisted cubic curves with exactly $k$ common totally real secant lines. 
We also obtain examples of 
real twisted cubics whose sets of common real secants cover a wide range of possibilities within our admissible classification
of common real secant lines.
\end{abstract}

\keywords{totally real secant line, twisted cubic curve, parameterized polynomial system, numerical algebraic geometry, homotopy continuation, enumerative geometry}

\subjclass[2020]{65H14, 65H10, 68W30, 14N10, 14Q99}

\maketitle

\section{Introduction}

\defn{Twisted cubic curves} are smooth rational curves of degree three in $\PP^{3}(\CC)$. This class of curves have been studied for nearly two centuries~\cite{Little}
and they are of particular interest to 
algebraic geometers as they admit simple parameterizations while exhibiting subtle geometric behavior. 
As stated in~\cite[Ex.~1.10]{HarrisBook},
``This is everybody's first example of a concrete variety that is not a hypersurface, linear space, or finite set of points.''

As a curve in $\PP^{3}(\CC)$, a twisted cubic curve has codimension two but is not a complete intersection as it cannot be defined by the vanishing of two polynomial equations
since three are required. 
For example, the \defn{standard twisted cubic} in $\PP^{3}(\CC)$ is defined to be the curve
\begin{equation}\label{eq:StandardTwisted}
C_0 = \bigl\{[s^3,s^2t,st^2,t^3]\mid[s,t]\in\PP^1(\CC)\bigr\}\subseteq\PP^3(\CC)
\end{equation}
and the ideal of polynomials vanishing on $C_0$ is
\begin{equation}\label{eq:IdealStandardTwisted}
I(C_0) = \langle x_1^2 - x_0 x_2,\,\,\, x_1 x_2 - x_0 x_3,\,\,\, x_1x_3 - x_2^2 \rangle\subseteq\CC[x_0,x_1,x_2,x_3].
\end{equation}
The dimension of the 
complex parameter space \defn{$\mathscr{H}$} 
consisting of 
all twisted cubic curves in~$\PP^3(\CC)$ is $12$~\cite[Lecture~12]{HarrisBook}.

\medskip
Given a twisted cubic curve $C\subset\PP^3(\CC)$, a line $\ell\subset\PP^3(\CC)$ is a \defn{secant line} of $C$ if $C$ and $\ell$ intersect in at least two distinct points, 
i.e., $\#(C\cap \ell)\geq 2$. In fact, this must be an equality
since a twisted cubic curve can intersect a line in at most two distinct points~\cite[\S3.1]{Arbarello}. Figure~\ref{fig:TwistedSecant} is an illustration of the standard twisted cubic 
$C_0$ and a secant line $\ell$.
\begin{figure}[htbp]
\centering
\begin{picture}(150,100)
\put(30,0){\includegraphics[scale=0.5]{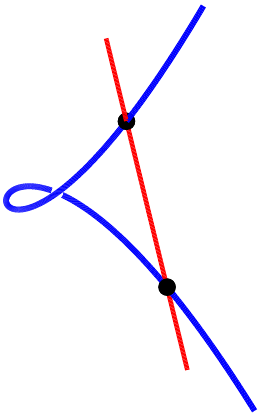}}
\put(80,90){$C_0$}
\put(45,80){$\ell$}
\end{picture}    
    \caption{Illustration 
    of the standard twisted cubic curve $C_0$ 
    together with a secant line $\ell$.}
    \label{fig:TwistedSecant}
\end{figure}

Since the closure of the set of secant lines to
a twisted cubic curve $C\subset\PP^3(\CC)$ is a surface in the four-dimensional Grassmannian of lines in $\PP^3(\CC)$, one expects that, given two general twisted cubic curves, the set of lines that are secants to both, i.e., \defn{common secant lines}, is finite. Indeed, it is a classical enumerative problem to count the number of such common secant lines to two general twisted cubic curves,
e.g., see~\cite[Ch.~3, Keynote~Question~(c)]{3264} and~\cite[Ex.~14.7.7(c)]{fulton-intersection-theory}.

\begin{problem}\label{problem:complex}
Given two general twisted cubic curves in $\PP^3(\CC)$, how many secant lines do they have in common?
\end{problem}

Schubert Calculus can be used to obtain the following classical result, which is a consequence of~\cite[Prop.~3.14]{3264}. Another approach using dynamic projection appears in~\cite[\S 3.5.2]{3264}. 

\begin{theorem}\label{thm:10-complex-secant-lines}
Two general twisted cubic curves in $\PP^3(\CC)$ have ten common secant lines.
\end{theorem}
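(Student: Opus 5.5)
We plan to compute the number as an intersection number in the Grassmannian $G=G(1,3)$ of lines in $\PP^3(\CC)$, following the Schubert calculus approach of~\cite[Prop.~3.14]{3264}. For a twisted cubic $C$, let $\Sigma_C\subset G$ be the closure of the set of secant lines of $C$. It is the image of $\mathrm{Sym}^2(C)\cong\PP^2$, with the diagonal mapping to the tangent lines, so it is a surface. The class $[\Sigma_C]\in A^2(G)$ can be written as $[\Sigma_C]=a\,\sigma_{2}+b\,\sigma_{1,1}$. We will then compute $[\Sigma_C]^2$ and show, by a transversality argument, that for general $C,C'$ this number counts the common secants.

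\textbf{The class of $\Sigma_C$.} Pair $\Sigma_C$ with the dual basis; recall $\sigma_2^2=\sigma_{1,1}^2=1$ and $\sigma_2\sigma_{1,1}=0$.
\begin{itemize}
\item The coefficient $a=\deg(\Sigma_C\cdot\sigma_2)$ counts secants through a general point $p$. Projection from $p$ maps $C$ birationally onto a plane cubic of arithmetic genus one and geometric genus zero. That cubic has exactly one node, so $a=1$. Equivalently, a classical fact is that through a general point of $\PP^3$ there passes a unique secant of $C$.
\item The coefficient $b=\deg(\Sigma_C\cdot\sigma_{1,1})$ counts secants lying in a general plane $H$. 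The plane $H$ meets $C$ in three points, which span $\binom32=3$ lines, so $b=3$.
\end{itemize}
Hence $[\Sigma_C]=\sigma_2+3\sigma_{1,1}$, and
\[
[\Sigma_C]\cdot[\Sigma_{C'}]=\sigma_2^2+9\sigma_{1,1}^2=1+9=10 .
\]

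\textbf{Main obstacle: enumerative validity.} We must show that for general $(C,C')$ the intersection $\Sigma_C\cap\Sigma_{C'}$ is finite and transverse, and that it consists of genuine common secants. This means excluding tangent lines and the other boundary points of the closures. We plan to use Kleiman's transversality theorem for the transitive action of $PGL_4$ on $G$. Replace $C'$ by $gC$ for general $g\in PGL_4$. Then $\Sigma_C\cap g\Sigma_{C}$ is reduced of dimension $0$, consisting of $10$ points, and it avoids any fixed curve inside $\Sigma_C$. In particular it avoids the curve of tangent lines, which is the image of the diagonal. Since $PGL_4$ acts transitively on twisted cubics, the pair $(C,gC)$ is general in $\mathscr{H}\times\mathscr{H}$. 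Kleiman's theorem applies in characteristic zero, and we must apply it to the smooth locus of $\Sigma_C$. The singular locus has dimension at most $1$, so for general $g$ it is missed by the translate. The remaining care is to check that $\mathrm{Sym}^2C\to\Sigma_C$ is birational, so that the fundamental class has no multiplicity; this holds because a general secant meets $C$ in exactly two points. A line in $\Sigma_C$ that is not a tangent line meets $C$ in two distinct points, so all ten intersection points are honest common secant lines, which proves Theorem~\ref{thm:10-complex-secant-lines}.
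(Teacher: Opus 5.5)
Your proposal is correct and takes the same route the paper relies on: the paper gives no argument of its own but cites the Schubert calculus computation of \cite[Prop.~3.14]{3264}. That computation is exactly yours: the class $[\Sigma_C]=\sigma_2+3\sigma_{1,1}$, which is $\bigl(\binom{d-1}{2}-g\bigr)\sigma_2+\binom{d}{2}\sigma_{1,1}$ with $d=3$, $g=0$, squared to give $1+9=10$, with Kleiman transversality supplying enumerative validity. Your extra checks (avoiding tangent lines, the singular locus, and birationality) are sound but partly unnecessary: $\Sigma_C$ is in fact a smooth Veronese surface, so the singular-locus step is vacuous.
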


Two natural follow-up questions arise from this theorem and, to the best of our knowledge,
neither of which has previously been explored:
the monodromy group and a real analogue of Problem~\ref{problem:complex}.

\begin{problem}\label{problem:Monodromy}
What is the monodromy group of the ten common secant lines 
to two twisted cubic curves in $\PP^3(\CC)$?  
\end{problem}

From Theorem~\ref{thm:10-complex-secant-lines}, the monodromy group is a subgroup of $S_{10}$, the symmetric group on ten elements, since it permutes the ten common secants. Our first result is 
that it is indeed 
the full symmetric group.

\begin{theorem}\label{thm:Monodromy}
The monodromy group of the ten common secant lines to two twisted cubic curves in $\PP^3(\CC)$ is $S_{10}$.    
\end{theorem}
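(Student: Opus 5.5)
We will prove Theorem~\ref{thm:Monodromy} with the standard criterion: a permutation group of degree~$n$ that is primitive and contains a transposition is the full symmetric group~$S_n$. Here $n=10$, by Theorem~\ref{thm:10-complex-secant-lines}.

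First we fix the setup. By applying an element of $PGL_4(\CC)$ we take the first curve to be the standard twisted cubic $C_0$ from~\eqref{eq:StandardTwisted}. The second curve is $g\cdot C_0$ with $g\in PGL_4(\CC)$. The incidence variety
\[
X=\{(g,\ell)\suchthat \ell \text{ is secant to } C_0 \text{ and to } g\cdot C_0\}
\]
maps onto the irreducible parameter space $PGL_4(\CC)$, and its generic fiber has ten points. This fiber is an open subset of the full parameter space $\mathscr H\times\mathscr H$ modulo the group action. Fixing $C_0$ does not change the monodromy group, since the $PGL_4$-orbit map is a fibration with connected fibers.

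Concretely, a common secant is given by parameters $(s_1,s_2)$ on $C_0$ and $(t_1,t_2)$ on the second curve. The condition is that the four points $C_0(s_1),C_0(s_2),gC_0(t_1),gC_0(t_2)$ are coplanar, i.e., a rank condition on a $4\times 4$ matrix. This condition has to be taken modulo the diagonals $s_1=s_2$ and $t_1=t_2$ and modulo the swaps $s_1\leftrightarrow s_2$ and $t_1\leftrightarrow t_2$.

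\textbf{Transitivity and primitivity.} We show $X$ has a unique component dominating $PGL_4$. The secant variety $\Sigma\subset G(1,3)$ of $C_0$ is the irreducible surface $\mathrm{Sym}^2\PP^1$. So
\[
Y=\{(g,\ell)\suchthat \ell\in\Sigma,\ g^{-1}\ell\in\Sigma\}
\]
fibers over $\Sigma\times\Sigma$. The set of $g$ carrying a fixed secant $\ell'$ to a fixed secant $\ell$ is a coset of a stabilizer, hence irreducible of constant dimension. Thus $Y$, and with it the monodromy group, is irreducible and transitive. The same argument on pairs $(g,\ell_1,\ell_2)$ of distinct common secants gives $2$-transitivity, which implies primitivity. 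In that step we need $PGL_4$ to act transitively on ordered pairs of general secant lines in the relevant sense. For pairs of disjoint lines, each meeting $C_0$ twice, this should follow from $3$-transitivity-type properties of $PGL_2$ acting on $C_0$, combined with a dimension count.

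\textbf{Transposition.} We need a codimension-one degeneration in $PGL_4$ at which exactly two of the ten secants coalesce simply while the other eight stay distinct. A simple double root gives a transposition via a small loop around the discriminant. We would exhibit an explicit $g_0$ where the fiber has exactly nine distinct points, one of multiplicity two. We would then verify simplicity by checking that the local discriminant has a simple branch, i.e., that the Jacobian has corank one with nondegenerate second-order term. One natural candidate: choose $g$ so that the two curves are tangent at a point, or so that a common secant becomes tangent to one curve.

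The main obstacle is this transposition step, and possibly primitivity if the $2$-transitivity dimension count has exceptional components. If a clean theoretical degeneration is hard to certify, the fallback is computational, matching the paper's numerical approach. We would track the ten solutions by homotopy continuation around random loops in the parameter space, collect the resulting permutations, and certify them (e.g., via certified path tracking). We would then check that the generated subgroup of $S_{10}$ is $S_{10}$, for instance by exhibiting a transposition together with a $10$-cycle, or a $9$-cycle fixing one point. Such generators together with transitivity force the full symmetric group.
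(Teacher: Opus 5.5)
\textbf{Verdict: genuine gap.} Your reduction to the criterion \emph{primitive and containing a transposition} is sound, and your transitivity and 2-transitivity step can be completed, more easily than you fear and with no $\mathrm{PGL}_2$ argument. Two distinct secants of $C_0$ that share no point of $C_0$ are skew, since any four distinct points of $C_0$ span $\PP^3$. $\mathrm{PGL}_4(\CC)$ acts transitively on ordered pairs of skew lines, with connected $7$-dimensional stabilizer. So the skew part of the pair-incidence variety is irreducible of dimension $4+4+7=15$. If two common secants meet, they share a point of $C_0$ (a $3$-dimensional family of pairs), and a pair of meeting lines has $8$-dimensional stabilizer. That part therefore has dimension at most $3+3+8=14$ and cannot dominate.

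The gap is the transposition. It carries the real difficulty, and you only promise it (``we would exhibit an explicit $g_0$''). Moreover, neither degeneration you suggest produces one.
\begin{itemize}
\item Two twisted cubics tangent at a point is not a codimension-one condition on $g$. Meeting is codimension one, and sharing the tangent line there imposes two more conditions. So this locus is not a component of the branch divisor, and a small loop around it carries no monodromy.
\item When a common secant becomes tangent to one curve, that curve's secant surface ($\mathrm{Sym}^2\PP^1$, a Veronese surface in $G(1,3)$) is smooth at the tangent line. Generically it meets the other secant surface transversally there. So the line persists as a simple isolated intersection point and collides with no other common secant. A loop around this locus only swaps the two intersection parameters on that curve. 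That is an $S_2\times S_2$ relabeling (a double transposition in $S_{40}$), which acts trivially on the ten lines.
\end{itemize}

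What you actually need is a point $g_0$ of the branch divisor of $X\to\mathrm{PGL}_4(\CC)$ with three properties: the two secant surfaces are tangent at exactly one genuine common secant, the contact there is simple (a fold), and the other eight intersections are transverse. Nothing in the proposal shows such a $g_0$ exists.

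Your computational fallback is the paper's method, but it becomes a proof only once specific loops are found and certified, and that is the whole content of the paper's argument. The paper uses two triangular loops based at the real matrix of Example~\ref{ex:10TotallyReal}, certified by Krawczyk-based tracking. They give $\sigma_{\gamma_1}=(2\,6)(3\,10\,5)(4\,8\,9)$ and $\sigma_{\gamma_2}=(1\,8\,9\,3\,10\,5\,7\,2)(4\,6)$, and the group they generate has order $10!$.

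Your claim that \emph{a transposition together with a $10$-cycle} forces $S_{10}$ is false without primitivity. For example, $(1\,2\,\cdots\,10)$ and $(1\,6)$ preserve the blocks $\{i,i+5\}$.

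The two routes combine well. Your 2-transitivity argument plus the single certified loop $\gamma_1$ already gives $S_{10}$, because $\sigma_{\gamma_1}^3=(2\,6)$ is a transposition. This would replace the second loop and the group-order computation with a conceptual primitivity argument.
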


We prove Theorem~\ref{thm:Monodromy} via computational algebraic geometry. First, we utilize the software systems 
\texttt{Bertini}~\cite{Bertini} and \texttt{Mathematica} to heuristically 
determine two monodromy loops whose corresponding monodromy actions appear to generate $S_{10}$.  
Once identified, we use the certified path-tracking software \texttt{CertifiedHomotopyTracking.jl}~\cite{CertifiedHomotopyTracking} to prove this is indeed the case.

\bigskip
We now consider formulating 
the real analogue of  
Problem~\ref{problem:complex}. A curve in $\PP^3(\CC)$ is said to be \defn{real} if it has a set of defining equations with real coefficients. This defines both a real twisted cubic curve and a real line in $\PP^3(\CC)$. For example, the standard twisted cubic
curve~$C_0$ in~\eqref{eq:StandardTwisted} is real as observed 
from the defining equations in~\eqref{eq:IdealStandardTwisted}.

Suppose that $C_1$ and $C_2$ are general twisted cubic curves that are real and $\ell$ is a real line that is a common secant line, i.e., a \defn{common real secant line}. 
By considering the set of intersections $C_1\cap \ell$ and $C_2\cap \ell$, there are only three possibilities:
\begin{itemize}
    \item $\ell$ is \defn{totally real} if both $C_1\cap \ell$ and $C_2\cap \ell$ consist of two real points;
    \item $\ell$ is \defn{partially real} if one of $C_1\cap \ell$ or $C_2\cap \ell$ consists of two real points and the other consists of a pair of nonreal complex conjugate points; and 
    \item $\ell$ is \defn{minimally real} if both $C_1\cap \ell$ and $C_2\cap \ell$ consist of a pair of nonreal complex conjugate~points.
\end{itemize}
Thus, a common real secant line must be either totally real, partially real, or minimally real. A common secant line that is not real is called a \defn{common nonreal secant line}. 
This classification yields
the following real analogue to Problem~\ref{problem:complex}.

\begin{problem}\label{problem:real}
Given two general twisted cubic curves in $\PP^3(\CC)$ that are real, how many totally real, partially real,
minimally real, and nonreal common secant lines are possible?
\end{problem}

\noindent As an initial answer to Problem~\ref{problem:real}, we show that there is no constraint on the number of totally real lines.
\begin{theorem}\label{thm:totally-real}
    For each $k\in \{0,1,\dots,10\}$, there exists real twisted cubic curves $C_{1}^{(k)},C_{2}^{(k)}$ 
    having $10$ common secant lines
    with exactly $k$ common totally real secant lines.
\end{theorem}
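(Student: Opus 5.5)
The plan is constructive: for each $k\in\{0,1,\dots,10\}$ we exhibit an explicit pair $C_1^{(k)},C_2^{(k)}$ and certify its secants. It is convenient to use a general fact about these problems. Applying a real projective transformation to both curves does not change the number of common secant lines or their types. Every real twisted cubic is projectively equivalent over $\RR$ to the standard curve $C_0$. So we may fix $C_1=C_0$ and take $C_2=g\cdot C_0$ for a real matrix $g\in GL_4(\RR)$.

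For such a pair, a common secant through $C_0(s_1),C_0(s_2)$ and through $g C_0(t_1),g C_0(t_2)$ is determined by the four points being coplanar. In affine charts this means
\[
\det\bigl[C_0(s_1),C_0(s_2),gC_0(t_1),gC_0(t_2)\bigr]=0 .
\]
This single determinant condition is not enough; the correct condition is that the $4\times 4$ matrix has rank~$2$. We then divide out the diagonals $s_1=s_2$ and $t_1=t_2$, and symmetrize in the elementary symmetric functions $\sigma_1=s_1+s_2$, $\sigma_2=s_1s_2$ and $\tau_1,\tau_2$. The result is a square system in four unknowns, generically with $10$ solutions by Theorem~\ref{thm:10-complex-secant-lines}.

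A solution is a real line exactly when $(\sigma,\tau)$ is real. The secant is totally real when, in addition, both discriminants $\sigma_1^2-4\sigma_2$ and $\tau_1^2-4\tau_2$ are positive. This gives a computable classification of each secant.

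To find the examples, I would sample real $g$ and solve by homotopy continuation, starting from a complex parameter point where all $10$ solutions are known. The monodromy computation behind Theorem~\ref{thm:Monodromy} already provides such a start system. For each sample I record the number $k$ of totally real solutions, and I use random search to populate the values $k=0,\dots,10$.

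Small $k$ should be easy. Large $k$, especially $k=10$, is the main obstacle, since random samples rarely produce many totally real solutions. For that case I would first try a degeneration. Take $C_2$ close to a reducible real configuration, for instance a union of three real lines or a conic plus a line, positioned so that the common secants can be counted by hand and all $10$ are totally real with distinct real intersection points. Then perturb to a smooth cubic; the real, simple solutions persist. If that is awkward, I would fall back on an optimization-driven random walk in $g$ that increases a count of positive discriminants, in the spirit of searches for fully real Schubert problems.

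Finally, the result must be rigorous rather than numerical. For each of the $11$ chosen rational matrices $g$, I would certify the solutions, either by the certified homotopy tracking used for Theorem~\ref{thm:Monodromy} or by Smale's $\alpha$-theory / interval Krawczyk tests. These certify that there are exactly $10$ isolated solutions, exactly $k$ of which are real. The certification must also cover the sign of each discriminant, that each line is genuinely real, and that the two intersection points are distinct. Since the count is exactly $10$, the curves are general enough, and exactness of $k$ follows from certifying that the remaining $10-k$ solutions are nonreal or have a negative discriminant.
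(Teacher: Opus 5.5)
Your plan is essentially the paper's proof. The paper takes $C_0$ and $M\cdot C_0$ for explicit real matrices $M$ found by uniform and localized random sampling with \texttt{Bertini}, and certifies the solutions of \eqref{eq:bisecant-system} with \texttt{alphaCertified}; these are Example~\ref{ex:10TotallyReal} for $k=10$ and the examples of Section~\ref{subsec:totally-real} for $k\le 9$. The differences are cosmetic: your elementary-symmetric coordinates with the discriminant test replace the paper's $40$ solutions grouped into $(S_2\times S_2)$-orbits and classified via Proposition~\ref{prop:classify}, and the paper used no degeneration argument, since sampling alone produced all eleven values of $k$.
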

\noindent We prove Theorem~\ref{thm:totally-real} by exhibiting specific curves $C_{1}^{(k)},C_{2}^{(k)}$ for each $k$ (see Section~\ref{subsec:totally-real}). 

By Theorem~\ref{thm:10-complex-secant-lines}, for a generic pair of twisted cubics that are real, the sum of the numbers of totally real, partially real, minimally real, and nonreal common secant lines must be~$10$. Thus, we can represent these counts using the $3$-tuple \defn{$(n_t,n_p,n_m)$} where \defn{$n_t$}, \defn{$n_p$}, and \defn{$n_m$} are the number of totally real, partially real, and minimally real common secant lines, respectively. The number of common real secant lines will be denoted by \defn{$n_{\RR}$} $\vcentcolon = n_t+n_p+n_m$
with the number of common nonreal secant lines
being $10-n_{\RR}$.
In particular, $n_t,n_p,n_m\in\{0,1,\dots,10\}$
such that $n_{\RR}=n_t+n_p+n_m$
is an even number between $0$ and $10$ 
as common nonreal secant lines must arise in complex conjugate pairs. By a combinatorial argument (Proposition~\ref{prop:admissible-s}), there are a total of $161$ distinct $3$-tuples $(n_t,n_p,n_m)$ satisfying these conditions, which we call \defn{admissible $3$-tuples}. An admissible $3$-tuple $(n_{t},n_{p},n_{m})$ is \defn{realizable} if there exists real twisted cubic curves $C_{1},C_{2}$ 
having $10$ common secant lines 
with exactly $n_{t},n_{p}$, and $n_{m}$ totally real, partially real, and minimally real common secant lines, respectively. We say $C_{1},C_{2}$ \defn{realize}~$(n_{t},n_{p},n_{m})$ and $(n_{t}, n_{p}, n_{m})$ is \defn{realized} by $C_{1}$ and $C_{2}$. 

Our main result is the following theorem. 

\begin{theorem}\label{thm:main}
Given an admissible 3-tuple $(n_{t}, n_{p}, n_{m})$ with $n_{\RR} = n_t+n_p+n_m$ such that
\begin{equation*}
    n_{\RR}\in \{0,2,4,6\} \quad \text{or} \quad n_{\RR} = 8 \text{ and } n_{t} \neq 1 \quad \text{or} \quad n_{\RR} = 10 \text{ and } n_{t} \in \{5,6,7,8,9,10\},
\end{equation*}
there exists real twisted cubic curves $C_{1},C_{2}$ that realize $(n_{t}, n_{p}, n_{m})$. 
\end{theorem}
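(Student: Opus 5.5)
The proof is constructive: for each admissible $3$-tuple $(n_t,n_p,n_m)$ covered by the hypotheses, we exhibit an explicit pair of real twisted cubics $C_1,C_2$ and certify the reality type of each of its ten common secants. We parameterize each real twisted cubic as the image of the standard twisted cubic $C_0$ in~\eqref{eq:StandardTwisted} under a real projective transformation $A_i\in PGL_4(\RR)$. A line $\ell$ is then a common secant exactly when there are $[s_1,t_1]\neq[s_2,t_2]$ and $[u_1,v_1]\neq[u_2,v_2]$ such that the four points $A_1\nu(s_j,t_j)$ and $A_2\nu(u_j,v_j)$ span a plane of dimension at most one, where $\nu$ is the twisted cubic map. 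Removing the diagonal gives a square polynomial system, symmetric under swapping the two points on each curve. By Theorem~\ref{thm:10-complex-secant-lines}, for generic $(A_1,A_2)$ it has exactly ten solutions modulo this symmetry. For each solution we read off the reality type: $\ell$ is real iff its Plücker coordinates are real, and in that case each intersection pair is either two real points or a conjugate pair.

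To find witnesses we search the real parameter space $PGL_4(\RR)\times PGL_4(\RR)$ and partition it by realized tuple. The tuple is locally constant off the real discriminant. It changes in one of two ways. At a wall where two real secants collide and become a conjugate pair, $n_{\RR}$ drops by two. At a wall where a real line becomes tangent to one of the curves, a real intersection pair turns into a conjugate pair, which converts totally real to partially real, or partially real to minimally real. So the plan is to start from seeds and move across these walls in a controlled way.

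\begin{itemize}
\item[(i)] Take the examples from Theorem~\ref{thm:totally-real}, which already give every value of $n_t$.
\item[(ii)] Run homotopy continuation with \texttt{Bertini} over many random real parameter values, plus targeted deformations (for example, moving $C_2$ so that one of its real intersection points with $\ell$ approaches a tangency), to populate the table of admissible tuples from Proposition~\ref{prop:admissible-s}.
\item[(iii)] Where random sampling misses a tuple, use a degenerate configuration, for example $C_1,C_2$ sharing many nearly common chords or close to a reducible limit, whose secant types are predictable by hand. Then perturb it, so that the reality types are controlled combinatorially.
\end{itemize}

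Each found pair must then be made rigorous. We would use a certified method such as Smale's $\alpha$-theory via \texttt{alphaCertified}, or the same certified tracking used for Theorem~\ref{thm:Monodromy}, on rational approximations of the parameters. This certifies that there are exactly ten distinct solutions, which solutions are real, and, for each real line, the reality of its two intersection pairs with each curve. The last step is a certification of a real or nonreal root of a real quadratic on $\ell$, with a discriminant bounded away from zero.

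The main obstacle is step (iii): reaching the extreme tuples, namely many real secants with few totally real ones ($n_{\RR}\in\{8,10\}$), where generic sampling rarely lands. This is consistent with the exclusions in the statement. For these I would first try starting from a $k=10$ totally real example and crossing tangency walls one at a time, pushing a single real secant toward tangency to $C_1$ or $C_2$ without colliding two secants. This should convert totally real lines to partially real and then minimally real while keeping $n_{\RR}=10$ or $8$. Each crossing is located numerically by tracking where the discriminant of the intersection quadratic changes sign.
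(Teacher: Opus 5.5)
Your proposal is essentially the paper's proof: a \texttt{Bertini}-driven heuristic search of the real parameter space (random sampling plus localized perturbations around parameter points whose tuples lie near not-yet-realized ones), then \texttt{alphaCertified} certification of each witness, then a check that every tuple allowed by the hypotheses is among the certified ones. Your wall-crossing and degenerate-limit ideas are optional ways of steering the search that the paper does not use; the theorem only asserts the tuples its search actually reached, and the excluded cases are left open rather than shown to be impossible.
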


\begin{figure}[htbp]
    \begin{picture}(100,180)
    \put(-120,20){\includegraphics[scale=0.35]{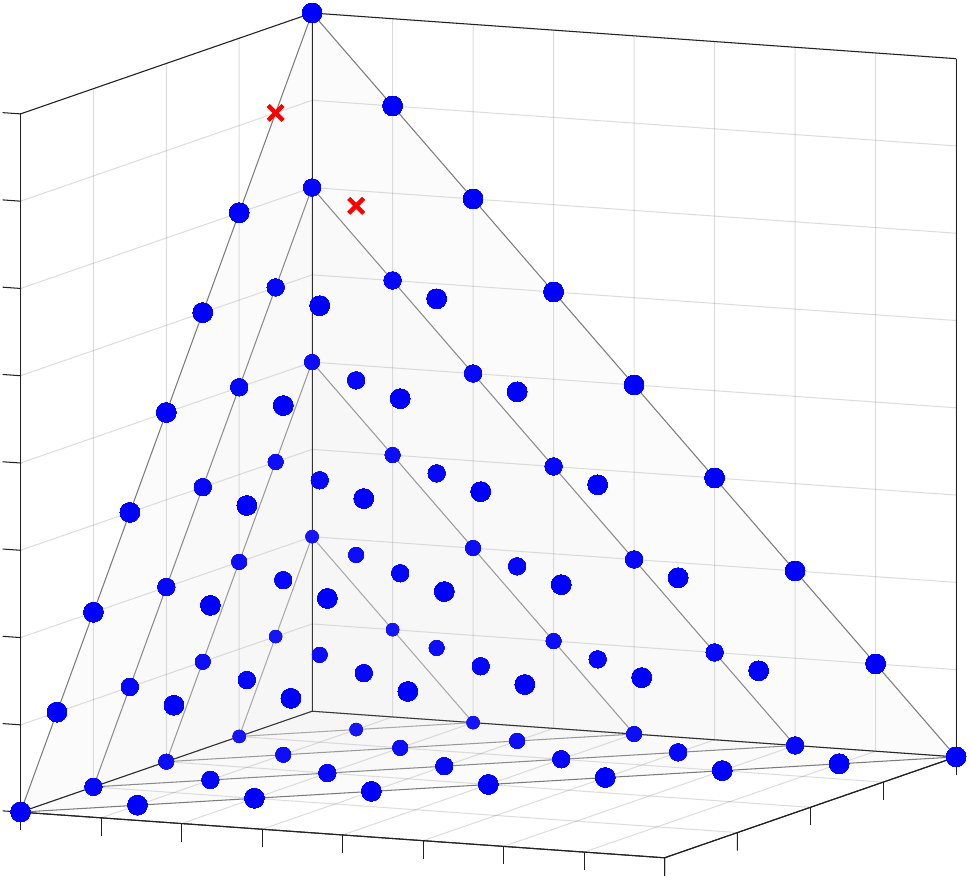}}
    \put(-130,15){8}
    \put(-135,140){8}
    \put(-10,5){8}
    \put(-140,80){$n_{m}$}
    \put(-70,10){$n_{p}$}
    \put(15,15){$n_{t}$}
    \put(80,20){\includegraphics[scale=0.3]{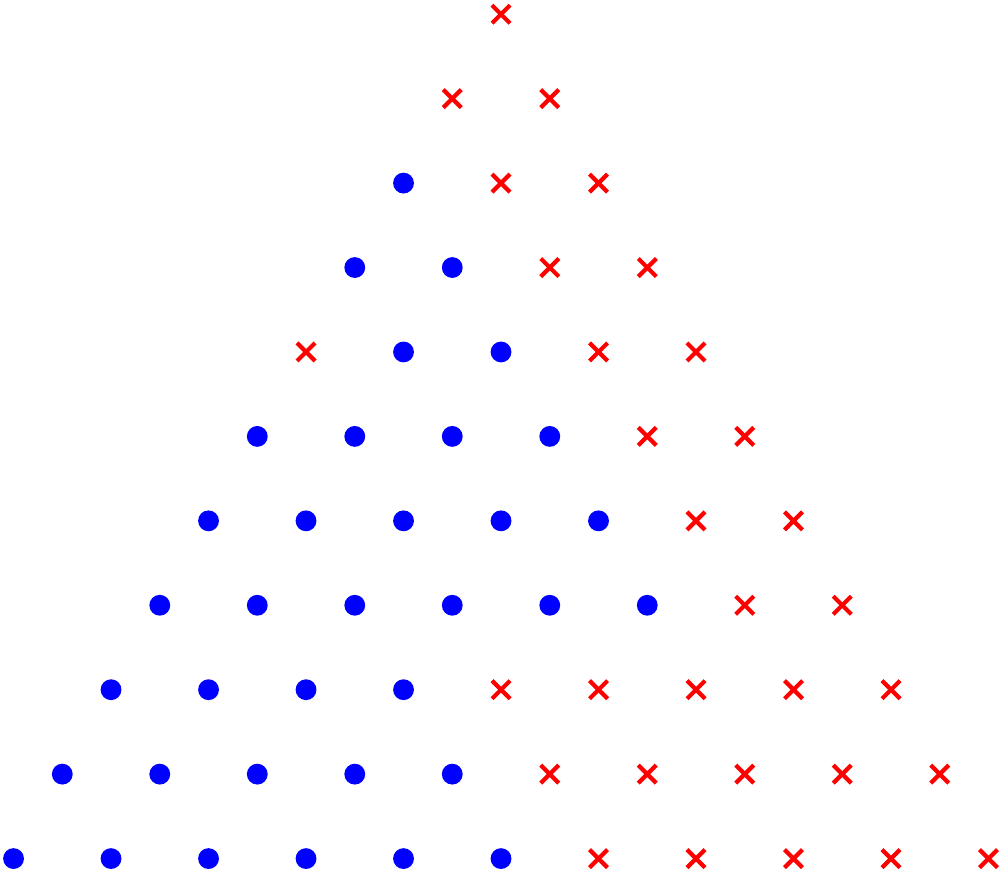}}
    \put(130,155){$(0,0,10)$}
    \put(215,0){$(0,10,0)$}
    \put(50,0){$(10,0,0)$}
    \end{picture}
    \caption{Realized 3-tuples are plotted with \textcolor{blue}{$\bullet$}. Admissible but not yet realized 3-tuples are plotted with \textbf{\textcolor{red}{$\times$}}. The figure on the left shows the 3-tuples with $n_{\RR}\in \{0, 2,4,6,8\}$. The figure on the right shows the 3-tuples with $n_{\RR} = 10$.}
    \label{fig:3-tuples-plot}
\end{figure}

\begin{remark}
    For $n_{\RR} \in \{8,10\}$, we have obtained partial results (see Appendix~\ref{sec:Table}). 
    In particular, 
    it remains an open problem whether the admissible 3-tuples
    of the form
    \mbox{$(0,n_{p},10-n_{p})$}
    for $0\leq n_{p}\leq 10$
    and \mbox{$(1,n_{p},9-n_{p})$}
    for $0\leq n_{p}\leq 9$
    are realizable.
\end{remark}

We prove Theorem~\ref{thm:main} by utilizing
computational algebraic geometry to exhibit examples of real twisted cubic curves realizing such admissible 3-tuples. We explore \defn{$\mathscr{H}(\RR)$}, the parameter space of real twisted cubic curves, sampling specific parameter points and using \texttt{Bertini} to heuristically determine the number of totally real, partially real, and minimally real common secant lines. Then, we utilize \texttt{alphaCertified}~\cite{alphaCertified} to prove that those counts are indeed correct. Our code and sample parameter points are contained in a {\tt GitHub} repository~\cite{github-repo-AMS-MRC-2025}. Figure~\ref{fig:3-tuples-plot}
shows the admissible and realized $3$-tuples 
with Table~\ref{tab:3-tuples} in 
Appendix~\ref{sec:Table} listing 
the 3-tuples explicitly.

\medskip
The rest of the paper is organized as follows. Section~\ref{sec:parameterized-polynomial} considers the formulation of a 
common secant to two twisted cubic curves as a parameterized polynomial system, which will be used to solve Problem~\ref{problem:Monodromy} and to investigate Problem~\ref{problem:real}. Section~\ref{sec:MonodromyGroup} proves Theorem~\ref{thm:Monodromy} by computing two elements contained in 
the monodromy group that generate $S_{10}$. Section~\ref{sec:real-secants} proves Theorems~\ref{thm:totally-real} and~\ref{thm:main}, building on necessary conditions from Section~\ref{sec:parameterized-polynomial} (Proposition~\ref{prop:admissible-s}) with computational algebraic geometry exhibiting an example for each admissible $3$-tuple realized. Concluding remarks and open problems are provided in Section~\ref{sec:discussion-and-open-problems}.

\section{
Secants of twisted cubics
}\label{sec:parameterized-polynomial}

Our proofs of Theorems~\ref{thm:Monodromy},~\ref{thm:totally-real}, and~\ref{thm:main} will use a formulation of the problem which is intrinsic to the geometry of the parameter space $\mathscr{H}$ of twisted cubic curves~\cite[Example~12.9]{HarrisBook}. Using a parametrization of the standard twisted cubic $C_{0}$, we describe common secant lines via rank conditions.
To that end, we regard vectors in $\CC^{4}$ (and, by extension, points in~$\PP^{3}(\CC)$) as column vectors. Let $C_{0}$ denote the standard twisted cubic as in~\eqref{eq:StandardTwisted}
and $v(a) \vcentcolon = (1,a,a^{2},a^{3})^{T} \in \CC^{4}$. 
When $s\neq 0$, 
we have $[s,t] = [1,a]\in\PP^1(\CC)$ 
where $a = t/s$.~Hence,
\begin{equation*}
    C_0 = \overline{\{[v(a)] \mid a\in \CC\}}
    = \{[v(a)] \mid a\in \CC\} \cup \{[0,0,0,1]\}.
\end{equation*}

It is known that every twisted cubic in $\PP^{3}(\CC)$ is the image of $C_{0}$ under a projective linear transformation~\cite[Example 1.14]{HarrisBook}. Let \defn{$\mathrm{GL}_{4}(\CC)$} be the group of invertible $4\times 4$ matrices with entries in $\CC$. Given a matrix $M \in \mathrm{GL}_{4}(\CC)$, the action of $M$ on $\PP^{3}(\CC)$ given by
\begin{equation*}
    M\cdot [x] \vcentcolon  = [Mx], \quad 0\neq x \in \CC^{4}
\end{equation*}
is well-defined and scaling $M$ does not change the action. Now, let
\begin{equation*}
   C_{1}\vcentcolon = \overline{\bigl\{[Mv(s)] \in \PP^{3}(\CC) \mid s\in \CC\bigr\}}.
\end{equation*}
In particular, $C_{1}$ only depends on the equivalence class $[M]\in$ \defn{$\mathrm{PGL}_{4}(\CC)$} $\vcentcolon = \mathrm{GL}_{4}(\CC) / \CC^{\times}$
where $\CC^\times = \CC\setminus\{0\}$.
The group $\mathrm{PGL}_{4}(\CC)$, which describes all projective linear transformations of~$\PP^{3}(\CC)$, is also identified with the Zariski open subset of $\PP(\CC^{4\times 4})\cong\PP^{15}(\CC)$ consisting of points $[M]$ with $\det(M) \neq 0$.
Hence, we can define a map $\mathrm{PGL}_{4}(\CC) \to \mathscr{H}$ sending $[M] \in \mathrm{PGL}_{4}(\CC)$ to $\overline{\bigl\{[Mv(s)] \in \PP^{3}(\CC) \mid s\in \CC\bigr\}}$. 
Such a map is surjective and the fiber over any $C \in \mathscr{H}$ consists of those projective linear transformations sending $C$ to itself, which form a subgroup isomorphic to $\mathrm{PGL}_{2}(\CC)$, e.g., see~\cite[Example 10.9]{HarrisBook}. Hence, $\mathscr{H} \cong \mathrm{PGL}_{4}(\CC) / \mathrm{PGL}_{2}(\CC)$ which has dimension
$(4^2-1) - (2^2-1) = 12$. 

\begin{remark}
When working with $[M] \in \mathrm{PGL}_{4}(\CC)$ we choose representatives $M \in \mathrm{GL}_{4}(\CC)$.
\end{remark}

Incidence conditions between $C_{0}$ and $C_{1}$ may be expressed in terms of the linear dependence of vectors in $\CC^{4}$. 
In fact, a line meeting 
$\{[v(a)] \mid a\in \CC\}$
and $\bigl\{[Mv(s)] \in \PP^{3}(\CC) \mid s\in \CC\bigr\}$
in two points each corresponds to a rank condition on the $4\times 4$ matrix whose columns are
\begin{equation*}
v(t_{1}), \quad v(t_{2}), \quad Mv(s_{1}), \quad Mv(s_{2})
\end{equation*}
where $t_1\neq t_2$ and $s_1\neq s_2$.
In particular, when $t_{1}\neq t_{2}$, the vectors 
$v(t_1)$, $v(t_2)$ are linearly independent and
\begin{equation}\label{eq:projective-secant-C0}
    \ell(t_{1},t_{2}) = \mathrm{Span}(v(t_{1}),v(t_{2})) \subset \PP^{3}(\CC)
\end{equation}
is a secant line to $C_0$.
The line $\ell(t_{1},t_{2})$ intersects
$\bigl\{[Mv(s)] \in \PP^{3}(\CC) \mid s\in \CC\bigr\}$ in two distinct points
if and only if there exist $s_{1}\neq s_{2}$ such that
\begin{equation*}
    Mv(s_{1}), Mv(s_{2}) \in \ell(t_{1},t_{2}).
\end{equation*}
Equivalently, for each $i=1,2$, the vectors $v(t_{1}),v(t_{2}),Mv(s_{i})^{T} \in \CC^{4}$ are linearly dependent:
\begin{equation}\label{eq:rank}
    \mathrm{rank}
    \begin{pmatrix}
    v(t_{1}) & v(t_{2}) & Mv(s_{i})  
    \end{pmatrix}\leq 2, \quad i=1,2.
\end{equation}
Since $t_1\neq t_2$ implies 
that $\mathrm{rank}
    \begin{pmatrix}
    v(t_{1}) & v(t_{2})
    \end{pmatrix} = 2$
with the leading $2\times 2$ block
being invertible, Condition~\eqref{eq:rank}
with $t_1\neq t_2$ is equivalent
to the vanishing of
the $3\times 3$ minors 
of $\begin{pmatrix}
    v(t_{1}) & v(t_{2}) & Mv(s_i) 
    \end{pmatrix}$
    using rows $1,2,3$ and $1,2,4$.
For convenience, write
 $M = (m_{ij})_{1\leq i,j\leq 4}$ and define 
\begin{equation}\label{eq:cubic-poly}
    p_{i}(s;M)\vcentcolon = m_{i1} + m_{i2}s + m_{i3}s^{2} + m_{i4}s^{3}, \quad i = 1,2,3,4.
\end{equation}
Then, computing these two minors
and factoring out the common factor 
$t_{1} - t_{2}$, we obtain
\begin{equation*}
\begin{split}
    f_{1}(t_{1},t_{2},s_{i};M) & = p_{3}(s_{i};M) - (t_{1} + t_{2})p_{2}(s_{i};M) + t_{1}t_{2}p_{1}(s_{i};M),\\
    f_{2}(t_{1},t_{2},s_{i};M) & = p_{4}(s_{i};M) - (t_{1}^{2} + 2t_{1}t_{2} + t_{2}^{2})p_{2}(s_{i};M) + t_{1}t_{2}p_{1}(s_{i};M),
\end{split} \quad i = 1,2.
\end{equation*}
In totality, this yields a parametrized system of four polynomial equations in the four unknowns $(t_{1},t_{2},s_{1},s_{2}) \in \CC^{4}$ with parameters being the entries of $M\colon$
\begin{equation}\label{eq:bisecant-system}
\resizebox{0.92\textwidth}{!}{$
F(t_{1},t_{2},s_{1},s_{2};M) = 
\left[
\begin{aligned}
f_{1}(t_{1},t_{2},s_{1};M)\\
f_{1}(t_{1},t_{2},s_{2};M)\\
f_{2}(t_{1},t_{2},s_{1};M)\\
f_{2}(t_{1},t_{2},s_{2};M)
\end{aligned}
\right]
=
\left[
\begin{aligned}
&p_{3}(s_{1};M) 
  - (t_{1} + t_{2})p_{2}(s_{1};M) 
  + t_{1}t_{2}p_{1}(s_{1};M) \\[3pt]
&p_{3}(s_{2};M) 
  - (t_{1} + t_{2})p_{2}(s_{2};M) 
  + t_{1}t_{2}p_{1}(s_{2};M) \\[3pt]
&p_{4}(s_{1};M) 
  - (t_{1}^{2} + 2t_{1}t_{2} + t_{2}^{2})p_{2}(s_{1};M) 
  + t_{1}t_{2}p_{1}(s_{1};M) \\[3pt]
&p_{4}(s_{2};M) 
  - (t_{1}^{2} + 2t_{1}t_{2} + t_{2}^{2})p_{2}(s_{2};M) 
  + t_{1}t_{2}p_{1}(s_{2};M)
\end{aligned}
\right]$}
\end{equation}
where we only consider solutions
with $t_1\neq t_2$ and $s_1\neq s_2$.
Since the labeling of $t_1$ and~$t_2$
as well as the labeling of $s_1$ and $s_2$ 
are both arbitrary,
there is a natural $(S_2\times S_2)$-action
on the set of solutions
where the first $S_2$ permutes $t_1$ and~$t_2$
while the second $S_2$ permutes $s_1$ and $s_2$.
Hence, a common secant line corresponds to a single orbit of size four in the open subset $t_{1}\neq t_{2}, s_{1}\neq s_{2}$ of the solution set, namely
\begin{equation}\label{eq:S2xS2-action}
    (t_{1},t_{2},s_{1},s_{2}), \quad  (t_{2},t_{1},s_{1},s_{2}), \quad
    (t_{1},t_{2},s_{2},s_{1}), \text{ and} \quad (t_{2},t_{1},s_{2},s_{1}).
\end{equation}

\begin{proposition}\label{prop:40-lines}
For a general $[M] \in \mathrm{PGL}_{4}(\CC)$, the system~\eqref{eq:bisecant-system} has 40 isolated nonsingular solutions $(t_1,t_2,s_1,s_2)\in \CC^{4}$
and all such nonsingular solutions
satisfy $t_1\neq t_2$ and $s_1\neq s_2$.
These nonsingular solutions are 
grouped into ten orbits of size four under the $(S_{2}\times S_{2})$-action
in accordance with Theorem~\ref{thm:10-complex-secant-lines}. 
\end{proposition}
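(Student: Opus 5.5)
The plan is to combine Theorem~\ref{thm:10-complex-secant-lines} with a correspondence between common secant lines and orbits of solutions, and then use a single computed witness point to upgrade that correspondence to a statement about the general parameter.

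\textbf{Step 1: from lines to solutions.} For general $[M]$, the curves $C_0$ and $C_1$ are a general pair of twisted cubics, because $\mathrm{PGL}_4(\CC)$ acts transitively on $\mathscr{H}$. So by Theorem~\ref{thm:10-complex-secant-lines} they have exactly ten common secant lines. For general $M$, none of these lines passes through $[0,0,0,1]\in C_0$ or through $[M e_4]\in C_1$, the points at infinity of the affine charts. This is a proper closed condition, so it holds on a dense open set of $M$. Hence each line is $\ell(t_1,t_2)$ with $t_1\neq t_2$ and meets $C_1$ at $[Mv(s_1)],[Mv(s_2)]$ with $s_1\neq s_2$. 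By the equivalence between \eqref{eq:rank} and the two minors derived above, each line yields exactly one free $(S_2\times S_2)$-orbit \eqref{eq:S2xS2-action} of size four of solutions of \eqref{eq:bisecant-system}. Conversely, every solution with $t_1\neq t_2$ and $s_1\neq s_2$ gives a common secant line. So the solutions in this open region number exactly $40$.

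\textbf{Step 2: nonsingularity and isolation.} Here I would use the parameter-space structure. Consider the incidence variety $\{(t,s,M): F=0\}$ over $\CC^{16}$ and its projection to $M$. It suffices to show that for one explicit $M^*$ the system has $40$ nonsingular solutions, all with $t_1\neq t_2$ and $s_1\neq s_2$. Nonsingularity means the Jacobian determinant does not vanish, which is an open condition. By the implicit function theorem each such solution then extends to a nonsingular solution over a neighborhood of $M^*$. Genericity then follows by the standard coefficient-parameter argument: the number of nonsingular isolated solutions is constant on a Zariski open dense set and is at least $40$ there. Combining this with Step 1, the count is exactly $40$ as long as no nonsingular solution can lie on $t_1=t_2$ or $s_1=s_2$.

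\textbf{Step 3: excluding the diagonals.} This is the step I expect to be the main obstacle. The difficulty is that \eqref{eq:bisecant-system} may have extra solutions with $t_1=t_2$ or $s_1=s_2$. If $s_1=s_2$, the system collapses to two equations in three unknowns. It therefore has positive-dimensional solution components, namely tangent-type conditions or lines through a single point of $C_1$ meeting the secant variety, and no point of a positive-dimensional component is isolated. I would check that every solution with $s_1=s_2$ lies on such a component, so that it is not an isolated nonsingular solution. Similarly, for $t_1=t_2$ the equations become the tangent-line conditions, and this set can still contain isolated points. For these I would argue that the Jacobian is singular there, using the symmetry: the swap $t_1\leftrightarrow t_2$ fixes the point, and $F$ is invariant under the swap. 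Hence the Jacobian restricted to the antisymmetric direction $(1,-1,0,0)$ must vanish, because $F$ is symmetric in $t_1,t_2$ and so $\partial_{t_1}F=\partial_{t_2}F$ on the diagonal. The same argument applies on $s_1=s_2$, where the swap of $s_1,s_2$ permutes the equations and gives a corresponding sign or permutation structure. This also settles the $s$-diagonal cleanly.

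Finally, I would certify the witness $M^*$ computationally, for instance with \texttt{alphaCertified}, by exhibiting $40$ certified nonsingular solutions grouped into ten orbits. This matches Theorem~\ref{thm:10-complex-secant-lines} and Step 1.
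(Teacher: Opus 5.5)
Your proposal is correct and follows essentially the same route as the paper. It uses the same singularity arguments on the diagonals (a dimension count for $s_1=s_2$, and $\partial F/\partial t_1=\partial F/\partial t_2$ for $t_1=t_2$), the same passage from the ten lines of Theorem~\ref{thm:10-complex-secant-lines} to $40$ solutions via free $(S_2\times S_2)$-orbits, and nonsingularity deduced from a single witness parameter.

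The differences are minor. The paper certifies its witness exactly in \texttt{Macaulay2}, by checking $\deg J=\deg\sqrt{J}=40$ for the saturated ideal, rather than with \texttt{alphaCertified}. It also leaves implicit the persistence argument you spell out, by which a nonsingular witness transfers to a general parameter. Finally, your chart condition (common secants avoiding $[0,0,0,1]$ and $[Me_4]$) is more accurate than the paper's ``not contained in $x_0=0$.''
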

\begin{proof}
From  the structure of $F$
in \eqref{eq:bisecant-system},
if $s_1=s_2$, then $F$ only consists 
of two distinct polynomials in
three variables.
Hence, any solution
with $s_1=s_2$ must be lie
on a positive-dimensional component
and thus is singular.
Similarly, it is easy to observe
that $\partial F/\partial t_1(t_1,t_1,s_1,s_2;M) =  
\partial F/\partial t_2(t_1,t_1,s_1,s_2;M)$
so that any solution
with $t_1=t_2$ must also be~singular.

For the standard twisted cubic $C_0$    
and a general $C_1\in\mathscr{H}$,
the $10$ common secant lines 
are not contained in the plane
defined by $x_0=0$ in $\PP^3(\CC)$.
Hence, for general $[M]\in\mathrm{PGL}_{4}(\CC)$, there are $10$ common secant
lines between
$\{[v(a)] \mid a\in \CC\}$
and 
$\bigl\{[Mv(s)] \in \PP^{3}(\CC) \mid s\in \CC\bigr\}$.
Due to \eqref{eq:S2xS2-action},
these correspond with $40$
solutions of $F(t_1,t_2,s_1,s_2;M) = 0$.
with $t_1\neq t_2$ and $s_1\neq s_2$.
To show generic nonsingularity, it suffices to show nonsingularity
for a chosen $M$.  
For example, using {\tt Macaulay2}~\cite{Macaulay2} with the matrix
$$M =     \begin{pmatrix}
    1   &  0     &0  &   0 \\
     0  &   2   &  3  &   3\\
     0  &   1  &   1   &  2\\
     0  &   1 &    3    & 1
    \end{pmatrix},$$
the ideal
$J = \langle F\rangle : \langle (t_1-t_2)(s_1-s_2) \rangle^\infty \subset \QQ[t_1,t_2,s_1,s_2]$
has $\deg J = \deg \sqrt{J} = 40$ showing that all $40$ solutions
satisfying $t_1\neq t_2$ and $s_1\neq s_2$
are nonsingular.
\end{proof}

For parameterized polynomial systems such as $F$ in \eqref{eq:bisecant-system}, the monodromy group associated with the generically nonsingular isolated solutions is an invariant that encodes information about the structure of the solutions, e.g., see~\cite{MonodromyGroupCompute}. Due to the relationship described in \eqref{eq:S2xS2-action}, Theorem~\ref{thm:Monodromy} posits that the monodromy group associated with the 40 generically nonsingular isolated solutions of $F$ is $S_{10}\times S_2 \times S_2$. We will prove Theorem~\ref{thm:Monodromy} in Section~\ref{sec:MonodromyGroup}.

\medskip
Now, suppose that the entries of $M$ are real so that $C_{1}$ is a real twisted cubic curve.
Given $(t_{1},t_{2},s_{1},s_{2})\in\CC^{4}$ with
$t_{1}\neq t_{2}$, $s_{1}\neq s_{2}$, and
$F(t_{1},t_{2},s_{1},s_{2};M)=0$, 
we now need to determine if the corresponding common secant line $\ell$ $\vcentcolon = \ell(t_{1},t_{2})$ 
in \eqref{eq:projective-secant-C0}
is totally real, partially real, minimally real, or nonreal.  The following describes this process.

First, one determines if $\ell$ is a real line. A real secant line to $C_0$ must intersect $C_0$ in either two real points, implying $(t_{1},t_{2})\in\RR^2,$ or a pair of nonreal complex conjugate points, implying $t_{2}=\conj(t_{1})$ where $\conj(a)$ is the complex conjugate of $a$. 
Conversely, if $(t_{1},t_{2})\in\RR^2$, then $\ell$ is trivially observed to be a real line and, if $t_{2}=\conj(t_{1})$, then $\ell$ is a line that interpolates between two nonreal complex conjugate points and is therefore real.

Similarly, $\ell$ is real if and only if the intersection points of $\ell$ with $C_{1}$, i.e. $Mv(s_{i})$, $i=1,2$, are either real or nonreal complex conjugates. Since $M$ is real and invertible, this is equivalent to $(s_{1},s_{2})\in\RR^{2}$ or $s_{2}=\conj(s_{1})$, respectively. This yields the following approach to classifying common secant lines.

\begin{proposition}\label{prop:classify}
If the entries of $M$ are real and $(t_{1},t_{2},s_{1},s_{2})\in\CC^{4}$ is a solution to $F(t_{1},t_{2},s_{1},s_{2};M) = 0$ such that $t_{1}\neq t_{2},s_{1}\neq s_{2}$, then the corresponding common secant line $\ell(t_{1},t_{2})$ to $C_0$ and $C_{1}$ as in~\eqref{eq:projective-secant-C0} is classified as follows$\colon$
\begin{itemize}
    \item $\ell$ is \defn{\emph{totally real}} if and only if $(t_{1},t_{2},s_{1},s_{2})\in\RR^{4}$; 
    \item $\ell$ is \defn{\emph{partially real}} if and only if 
    either $(t_{1},t_{2})\in\RR^{2}$
    and $(s_{1},s_{2}) \in \CC^{2}\setminus \RR^{2}$ such that $s_{2}=\conj(s_{1}),$ or $(s_{1},s_{2})\in\RR^{2}$ and $(t_{1},t_{2})\in \CC^{2}\setminus \RR^{2}$
    such that $t_{2}=\conj(t_{1})$;
    \item $\ell$ is \defn{\emph{minimally real}} if and only if $(t_{1},t_{2},s_{1},s_{2}) \in \CC^{4}\setminus \RR^{4}$
    such that $t_{2}=\conj(t_{1})$ and $s_{2}=\conj(s_{1})$;
    \item $\ell$ is \defn{\emph{nonreal}} if and only if 
    $(t_{1},t_{2})\in\CC^{2}\setminus\RR^{2}$ and $t_{2}\neq \conj(t_{1})$.
\end{itemize}
\end{proposition}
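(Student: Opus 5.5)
The plan is to reduce the classification of $\ell=\ell(t_1,t_2)$ to the reality of the two pairs $\{t_1,t_2\}$ and $\{s_1,s_2\}$ separately, using the two equivalences just established before the statement. These say that $\ell$ is real if and only if either $(t_1,t_2)\in\RR^2$ or $t_2=\conj(t_1)$, and likewise if and only if either $(s_1,s_2)\in\RR^2$ or $s_2=\conj(s_1)$.

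First I would make these equivalences precise. For the $t$-side, $\ell\cap C_0$ consists exactly of $[v(t_1)]$ and $[v(t_2)]$. I would check that the point at infinity $[0,0,0,1]$ does not add a third point, which holds because a line meets a twisted cubic in at most two points. Next I would use that $a\mapsto[v(a)]$ is injective and commutes with conjugation, so $\conj([v(a)])=[v(\conj(a))]$. Hence $[v(t_1)]$ is a real point if and only if $t_1\in\RR$, and the two intersection points are swapped by conjugation if and only if $t_2=\conj(t_1)$.

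If $\ell$ is real, conjugation preserves $\ell\cap C_0$, since $C_0$ is real. So conjugation either fixes both points or swaps them, which gives exactly the two $t$-cases. Conversely, both cases make $\ell$ conjugation-invariant, hence real. The $s$-side works identically, since $M$ real and invertible gives $\conj([Mv(s)])=[Mv(\conj(s))]$, and $s\mapsto[Mv(s)]$ is injective on $\CC$.

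With this in hand, the four bullets follow by case analysis on the pair of types. Each pair is either ``two real points'' or ``a nonreal conjugate pair'' (where $t_1\neq t_2$ excludes $t_1=t_2\in\RR$ in the conjugate case), and the paper's definitions of totally, partially and minimally real are phrased in exactly these terms. I would match the cases as follows.
\begin{itemize}
\item Real/real is totally real, which is $(t_1,t_2,s_1,s_2)\in\RR^4$.
\item Real/conjugate in either order is partially real.
\item Conjugate/conjugate is minimally real. Here the tuple is automatically not in $\RR^4$.
\end{itemize}
For the nonreal bullet, $\ell$ is nonreal exactly when the $t$-condition fails. Failure means $t_2\neq\conj(t_1)$ and $(t_1,t_2)\notin\RR^2$; since $t_2=\conj(t_1)$ together with real $t_1$ would force $t_1=t_2$, this is the stated condition. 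The $t$-characterization is the right one to use here because $\ell$ is defined as $\ell(t_1,t_2)$.

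The only delicate point is justifying that conjugation of a real line restricts to the intersection with a real curve, and handling the point at infinity $[0,0,0,1]$ of $C_0$ (and its image under $M$). I would handle the latter by noting that the solutions under consideration have finite $t_i$ and $s_i$. Since the line already meets each curve in two such finite points, no further intersection point exists.
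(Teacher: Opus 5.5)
Your proposal is correct and follows essentially the same route as the paper. In the paragraphs just before the statement, the paper argues that $\ell$ is real if and only if $(t_1,t_2)\in\RR^2$ or $t_2=\conj(t_1)$, and, since $M$ is real and invertible, if and only if $(s_1,s_2)\in\RR^2$ or $s_2=\conj(s_1)$; it then reads off the four cases. You also supply details the paper leaves implicit: injectivity and conjugation-equivariance of $a\mapsto[v(a)]$ and $s\mapsto[Mv(s)]$, and the two-point bound that rules out extra intersection points such as $[0,0,0,1]$.
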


\begin{example}[10 totally real lines]\label{ex:10TotallyReal}
Take the following parameters as exact:
\begin{equation}\label{eq:parameter-p0}
M  = 
\begin{pmatrix}
1 & 0 & 0 & 0 \\
\phantom{-}1.4351 & \phantom{-}0.6797 & \phantom{1}3.9435 & \phantom{-}7.7238 \\
-1.3085 & \phantom{-}4.6694 & 13.1949 & \phantom{-}5.1509 \\
\phantom{-}1.1573 & \phantom{-}1.2007 & \phantom{1}5.7272 & \phantom{-}8.6591
\end{pmatrix}.
\end{equation}
Then, there are 40 isolated solutions in $\CC^4$ to $F(t_{1},t_{2},s_{1},s_{2};M) = 0$ 
with $t_1\neq t_2$ and $s_1\neq s_2$
which arise in 10 groups of four as in~\eqref{eq:S2xS2-action}. We utilized \texttt{alphaCertified} 
to certify that all 40 are real.  Rounded to four decimal places, one representative from each group is:
\begin{figure}[htbp]
\begin{picture}(220,150)
\put(-120,70){$
\begin{array}{lrrrrrrr} 
x_{1} &\vcentcolon=& (& 0.9750,& 0.1977,& 0.4144,& 2.4773 &),\\
x_{2} &\vcentcolon=& (& 0.1052,& 1.0123,& 0.4063,& -0.7032 &),\\
x_{3} &\vcentcolon=& (& 0.1089,& 1.0266,& 2.7003,& -0.7031 &),\\
x_{4} &\vcentcolon=& (& 1.0006,& -1.0064,& 0.2684,& -0.8378 &),\\
x_{5} &\vcentcolon=& (& 1.0311,& -1.0077,& -0.8386,& -2.0733 &),\\
x_{6} &\vcentcolon=& (& 0.9846,& -1.0399,& -1.8183,& 0.2619 &),\\
x_{7} &\vcentcolon=& (& -0.0610,& -0.8453,& -0.4690,& -0.0085 &),\\
x_{8} &\vcentcolon=& (& -1.0078,& 0.1115,& -0.8657,& -0.5436 &),\\
x_{9} &\vcentcolon=& (& -0.6075,& -0.3172,& -0.4000,& -0.0465 &),\\
x_{10} &\vcentcolon=& (& -1.0093,& 0.1324,& -0.8664,& 0.0316 &).\\
\end{array}
$}
\put(180,10){\includegraphics[scale=0.3]{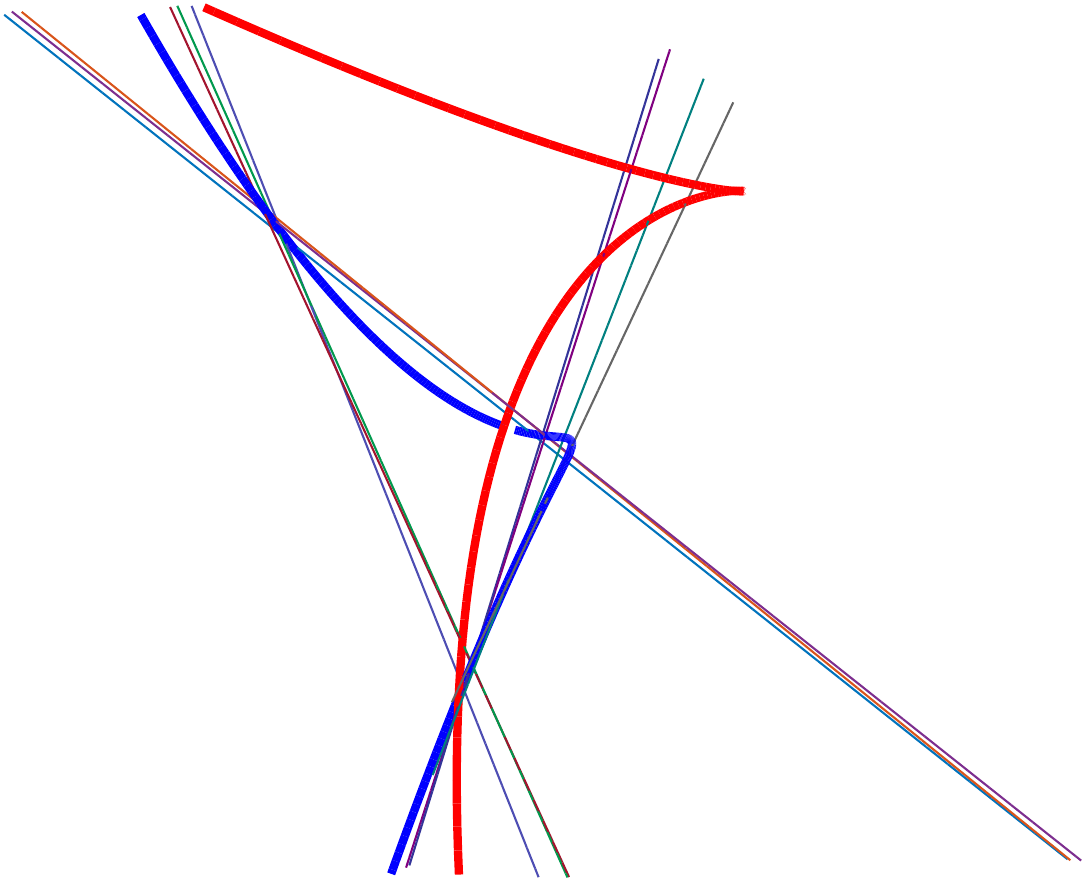}}
\end{picture}
    \caption{Ten totally real common real secant lines between $C_0$ and $C_{1}$.}
    \label{fig:10000}
\end{figure}

Each of these corresponds to a totally real common real secant line to $C_0$ and~$C_{1},$ as illustrated in Figure~\ref{fig:10000}.  The $3$-tuple $(n_t,n_p,n_m)=(10,0,0)$ is realized by these two curves.
\end{example}

We can build off of the description in Proposition~\ref{prop:classify} to determine which of the admissible $3$-tuples $(n_t,n_p,n_m)$ are realized 
by taking into account the behavior of solutions to systems of polynomial equations with real coefficients, as common nonreal secant lines must arise in complex conjugate pairs.

\begin{proposition}\label{prop:admissible-s}
Let $\NN_0 = \{0,1,2,\dots\}$ be the set of nonnegative integers. The total number of admissible $3$-tuples is
$$
\#\left(\bigcup_{n_{\RR}\in\{0,2,4,6,8,10\}} \{(n_t,n_p,n_m)\in\NN_0^3\suchthat
n_t+n_p+n_m = n_{\RR}\}\right) = \sum\limits_{n_{\RR}\in \{0,2,4,6,8,10\}} \binom{n_{\RR}+2}{2} = 161.$$
\end{proposition}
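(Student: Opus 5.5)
The plan is to unpack the definition of admissibility and then count. By Theorem~\ref{thm:10-complex-secant-lines}, each of the ten common secant lines is either totally real, partially real, minimally real, or nonreal, so $n_t+n_p+n_m+(10-n_{\RR})=10$ with $n_t,n_p,n_m\in\NN_0$. Since $M$ has real entries, the system~\eqref{eq:bisecant-system} has real coefficients. Hence complex conjugation maps solutions to solutions and commutes with the $(S_2\times S_2)$-action in~\eqref{eq:S2xS2-action}. It therefore permutes the ten lines, and by Proposition~\ref{prop:classify} it fixes exactly the real ones. Conjugation is an involution, so the nonreal lines split into orbits of size two, which forces $10-n_{\RR}$ to be even. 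Thus $n_{\RR}\in\{0,2,4,6,8,10\}$, and the admissible $3$-tuples are precisely the elements of the union displayed in the statement.

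Next we count. The sets indexed by different values of $n_{\RR}$ are pairwise disjoint, because $n_{\RR}$ is determined by the tuple as the sum of its coordinates. So the cardinality of the union is the sum of the cardinalities. By stars and bars, the number of solutions of $n_t+n_p+n_m=n$ in $\NN_0^3$ is $\binom{n+2}{2}$.

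Finally we evaluate
\[
\binom{2}{2}+\binom{4}{2}+\binom{6}{2}+\binom{8}{2}+\binom{10}{2}+\binom{12}{2}=1+6+15+28+45+66=161.
\]

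No step is a real obstacle; the computation is elementary. The only point that needs care is justifying the parity constraint, namely that conjugation acts on lines rather than merely on the forty solutions. This follows because the conjugation symmetry commutes with the $(S_2\times S_2)$-action on solutions.
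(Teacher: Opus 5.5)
Your proposal is correct and takes the same approach as the paper: stars and bars gives $\binom{n_{\RR}+2}{2}$ tuples for each even $n_{\RR}$, and summing over $n_{\RR}\in\{0,2,4,6,8,10\}$ yields $161$. Your complex-conjugation argument for the parity of $n_{\RR}$ and your disjointness remark are fine additions; they spell out what the paper builds into the definition of admissibility, where it simply notes that nonreal secants come in conjugate pairs.
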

\begin{proof}
To find the total number of admissible $3$-tuples, 
we use a version of the Stars and Bars Theorem~\cite[Section 1.2]{Stanley} from combinatorics which posits that 
there are exactly $\binom{m+2}{2}$
$3$-tuples of nonnegative integers whose sum is $m$. Since $n_{\RR}$ must be even, adding the $6$ values corresponding to even sums 
from $0$ to $10$ yields the result.
\end{proof}

\section{Monodromy}\label{sec:MonodromyGroup}

The monodromy group of ten common secant lines to twisted cubic curves is a subgroup of $S_{10}$.  This monodromy group encodes 
which permutations of
the ten common secant lines are possible 
as the pair of twisted cubic curves move in a closed loop such that the common secant lines 
vary smoothly.
The following proves Theorem~\ref{thm:Monodromy} by finding two closed loops for which the corresponding monodromy action generates $S_{10}$ using $F$ in \eqref{eq:bisecant-system}. To simplify notation, set $x\vcentcolon=(t_1,t_2,s_1,s_2)\in\CC^4$ so that \eqref{eq:bisecant-system}
can be written as $F(x;M)=0$.

We utilize the following procedure
which has been 
utilized for other problems before, e.g., see~\cite{GaloisSchubert,AposterioCertification}.
We select a sufficiently general 
base point $[M_{0}]\in\mathrm{PGL}_{4}(\CC)$ for which $F(x;M_{0}) = 0$ has~$40$ nonsingular isolated solutions
which arise in 10 groups of four, as summarized in Proposition~\ref{prop:40-lines}. 
This is sufficient since~$\mathscr{H}$ is path-connected. For each of the 10 groups, we select a representative and assign an ordering $x_1,\dots,x_{10}$. 
We then follow a two-step process to prove Theorem~\ref{thm:Monodromy}$\colon$ a heuristic search followed by certification. 

For the proof of our result, we will take $M_0$
to be the parameters listed in Example~\ref{ex:10TotallyReal} 
and 
$x_1,\dots,x_{10}$ to be the points 
as listed there as well.

\subsection{Heuristic searching for triangular loops}

Suppose that $\gamma:[0,1]\rightarrow\mathrm{PGL}_{4}(\CC)$ is a loop with $\gamma(0) = \gamma(1) = [M_0]$ 
such that $F(x;\gamma(a)) = 0$
has $40$ nonsingular isolated solutions 
for every $a\in [0,1]$. Then, for every $i=1,\dots,10$, there is a path $X_i:[0,1]\rightarrow\CC^4$ such that~$X_i(a)$ is a nonsingular solution of $F(x;\gamma(a))=0$ for every $a\in [0,1]$ with $X_i(0) = x_i$. Since~$\gamma$ is a loop, there exists a
unique $j_i\in\{1,\dots,10\}$ such that~$X_i(1)$ 
is in the same $(S_2\times S_2)$-orbit
of~$x_{j_i}$.  Let $\sigma_\gamma\in S_{10}$ 
be the permutation this describes, i.e.,
$\sigma_\gamma(i) = j_i$.
The monodromy group consists of all
such permutations $\sigma_\gamma$.

For our heuristic search, we generate loops $\gamma$ as the edges of a triangle 
where one of the vertices is $M_{0},$ the aforementioned fixed base point.
We select the other two vertices randomly
and  track the corresponding solution paths $X_i(a)$ using numerical path tracking implemented in {\tt Bertini} to determine what we purport to be the permutation $\sigma_\gamma$.

Since $S_{10}$ can be generated by two permutations, we heuristically searched for two loops~$\gamma_1$ and $\gamma_2$ such that the purported permutations $\sigma_{\gamma_1}$ and $\sigma_{\gamma_2}$ generate $S_{10}$. 
Each edge of the triangle loop $\gamma_{k}$ is parametrized by a straight-line homotopy in parameter space as illustrated in 
Figure~\ref{fig:two-loops}.
. If an edge joins $M_{\alpha}^{(k)}$ to $M_{\beta}^{(k)}$, we track
along the straight line 
$aM_{\alpha}^{(k)} + (1-a)M_{\beta}^{(k)}$ for $a\in [0,1]$.
In order for the path to be well-defined
in $\mathrm{PGL}_{4}(\CC)$, 
we need to ensure that 
$\det\bigl(aM_{i}^{(k)} + (1-a)M_{j}^{(k)}\bigr)\neq 0$ for all $a\in [0,1]$.
Since $\det\bigl(aM_{i}^{(k)} + (1-a)M_{j}^{(k)}\bigr)$ is a cubic polynomial in $a$, 
one just needs to check that the 
three roots do not lie in $[0,1]$.  
Tables~\ref{tab:roots-loop1} 
and~\ref{tab:roots-loop2}
show the 
roots for the corresponding path $\gamma_1$
and $\gamma_2$, respectively, 
where the paths $\gamma_1$ and $\gamma_2$
are defined by the following matrices:
\begin{equation*}
M_{1}^{(1)}{=} 
\begin{pmatrix}
1 & 0 & 0 & 0 \\
-4.9127{+}5.2184\mathrm{i} & \phantom{-}7.4923{-}5.3867\mathrm{i} & -2.8416{+}4.9372\mathrm{i} & \phantom{.}13.4085{-}6.7213\mathrm{i}\\
\phantom{-}6.3049{-}7.1142\mathrm{i} & -0.9184{+}2.8836\mathrm{i} & \phantom{.}18.7312{-}3.9457\mathrm{i} & -1.2645{+}6.1189\mathrm{i}\\
-3.6841{+}6.4725\mathrm{i} & \phantom{-}6.8741{-}4.5526\mathrm{i} & -1.4732{+}7.0034\mathrm{i} & \phantom{.}12.9186{-}5.6418\mathrm{i}\\
\end{pmatrix},
\end{equation*}

\begin{equation*}
M_{2}^{(1)}{=} 
\begin{pmatrix}
1 & 0 & 0 & 0 \\
-4.6432{+}5.0037\mathrm{i} & \phantom{-}7.1089{-}5.1026\mathrm{i} & -3.2147{+}5.2413\mathrm{i} & \phantom{.}13.6874{-}6.5042\mathrm{i}\\ 
\phantom{-}6.1085{-}6.8749\mathrm{i} & -1.1876{+}2.5418\mathrm{i} & \phantom{.}18.3925{-}4.2146\mathrm{i} & -0.9034{+}6.3621\mathrm{i}\\
-3.9218{+}6.7541\mathrm{i} & \phantom{-}7.1428{-}4.9185\mathrm{i} & -1.0986{+}6.7217\mathrm{i} & \phantom{.}13.2149{-}5.3014\mathrm{i}
\end{pmatrix},
\end{equation*}

\begin{equation*}
M_{1}^{(2)}{=} 
\begin{pmatrix}
1 & 0 & 0 & 0 \\
-6.8423{+}7.5184\mathrm{i} & -7.4135{-}6.1187\mathrm{i} & \phantom{.}12.9184{+}8.2746\mathrm{i} & \phantom{-}0.3642{-}7.5238\mathrm{i}\\
\phantom{-}8.2146{-}9.3275\mathrm{i} & 13.2841{+}5.8193\mathrm{i} & \phantom{.}21.7065{-}4.1627\mathrm{i} & -2.1934{+}9.4216\mathrm{i}\\
\phantom{-}5.7628{+}4.1942\mathrm{i} & \phantom{.}9.6825{-}8.0413\mathrm{i} & -1.5162{+}6.2749\mathrm{i} & \phantom{.}14.8326{-}3.9817\mathrm{i}
\end{pmatrix}
,
\end{equation*}

\begin{equation*}
M_{2}^{(2)}{=}
\begin{pmatrix}
1 & 0 & 0 & 0 \\
-4.6385{+}4.9412\mathrm{i} & \phantom{-}7.1638{-}5.1029\mathrm{i} & -3.1047{+}5.2841\mathrm{i} & \phantom{.}13.7426{-}6.4487\mathrm{i}\\
\phantom{-}6.0174{-}7.3821\mathrm{i} & -1.2439{+}2.6017\mathrm{i} & \phantom{.}18.4084{-}4.2836\mathrm{i} & -0.9136{+}5.8764\mathrm{i}\\
-3.9126{+}6.7318\mathrm{i} & \phantom{-}7.2148{-}4.8745\mathrm{i} & -1.0987{+}6.7215\mathrm{i} & \phantom{.}13.2143{-}5.3816\mathrm{i}
\end{pmatrix}.
\end{equation*}

\begin{table}[ht]
\centering
\small
\setlength{\tabcolsep}{6pt}
\renewcommand{\arraystretch}{1.2}
\begin{tabular}{c c l}
\toprule
Edge & Determinant & Roots of $\det = 0$ \\
\midrule
$M_0$--$M_1^{(1)}$
& $\det\big(aM_0 + (1-a)M_1^{(1)}\big)$
&
$\begin{aligned}
a_{1} &\approx -0.5753-2.1704\mathrm{i}\\
a_{2} &\approx  \phantom{-}0.8602+0.2906\mathrm{i}\\
a_{3} &\approx \phantom{-}1.0972-0.0278\mathrm{i}
\end{aligned}$
\\[6ex]

$M_1^{(1)}$--$M_2^{(1)}$
& $\det\big(aM_1^{(1)} + (1-a)M_2^{(1)}\big)$
&
$\begin{aligned}
a_{1} &\approx -208.4283+22.1148\mathrm{i}\\
a_{2} &\approx \phantom{-20}0.3501-\phantom{2}0.1134\mathrm{i}\\
a_{3} &\approx \phantom{-2}14.6605-13.7360\mathrm{i}
\end{aligned}$
\\[6ex]

$M_2^{(1)}$--$M_0$
& $\det\big(aM_2^{(1)} + (1-a)M_0\big)$
&
$\begin{aligned}
a_{1} &\approx -0.1782+0.0504\mathrm{i}\\
a_{2} &\approx \phantom{-}0.0944+0.2431\mathrm{i}\\
a_{3} &\approx \phantom{-}2.1115-1.0213\mathrm{i}
\end{aligned}$
\\
\bottomrule
\end{tabular}
\caption{Roots of determinant polynomials along the edges of loop~$\gamma_{1}$.}
\label{tab:roots-loop1}
\end{table}

\begin{table}[ht]
\centering
\small
\setlength{\tabcolsep}{6pt}
\renewcommand{\arraystretch}{1.2}
\begin{tabular}{c c l}
\toprule
Edge & Determinant & Roots of $\det = 0$ \\
\midrule
$M_0$--$M_1^{(2)}$
& $\det\big(aM_0 + (1-a)M_1^{(2)}\big)$
&
$\begin{aligned}
a_{1} &\approx -5.9217+6.0785\mathrm{i}\\
a_{2} &\approx \phantom{-}0.9967-0.0002\mathrm{i}\\
a_{3} &\approx \phantom{-}1.8131-0.1332\mathrm{i}
\end{aligned}$
\\[6ex]

$M_1^{(2)}$--$M_2^{(2)}$
& $\det\big(aM_1^{(1)} + (1-a)M_2^{(2)}\big)$
&
$\begin{aligned}
a_{1} &\approx -5.0820 + 8.5990\mathrm{i}\\
a_{2} &\approx -0.0443 - 0.0015\mathrm{i}\\
a_{3} &\approx  \phantom{-}0.1373 + 0.6700\mathrm{i}
\end{aligned}$
\\[6ex]

$M_2^{(2)}$--$M_0$
& $\det\big(aM_2^{(2)} + (1-a)M_0\big)$
&
$\begin{aligned}
a_{1} &\approx -0.1800 + 0.0417\mathrm{i}\\
a_{2} &\approx \phantom{-}0.1429 + 0.2067\mathrm{i}\\
a_{3} &\approx \phantom{-}2.6396 - 1.0135\mathrm{i}
\end{aligned}$
\\
\bottomrule
\end{tabular}
\caption{Roots of determinant polynomials along the edges of loop~$\gamma_{2}$.}
\label{tab:roots-loop2}
\end{table}

\medskip
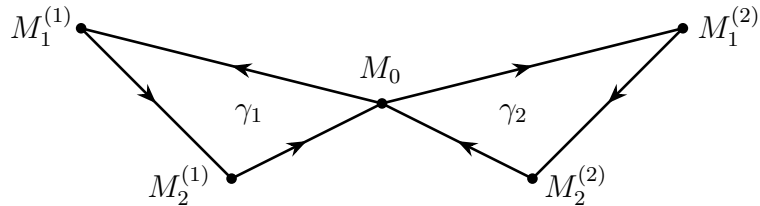
\begin{figure}[htbp]
\begin{tikzpicture}[line cap = round, line join = round]
\tikzset{
  midarrow/.style={
    line width=1pt,
    postaction={
      decorate,
      decoration={
        markings,
        mark=at position 0.5 with {\arrow{Stealth}}
      }
    }
  }
}
    
    \coordinate (A) at (8,2);
    \coordinate (B) at (6,3);
    \coordinate (C) at (10,4);
    \coordinate (D) at (2,4);
    \coordinate (E) at (4,2);

    \draw[midarrow] (B) -- (C);
    \draw[midarrow] (C) -- (A);
    \draw[midarrow] (A) -- (B);

    \draw[midarrow] (B) -- (D);
    \draw[midarrow] (D) -- (E);
    \draw[midarrow] (E) -- (B);
    
    \fill[black] (A) circle(2pt);
    \fill[black] (B) circle(2pt);
    \fill[black] (C) circle(2pt);
    \fill[black] (D) circle(2pt);
    \fill[black] (E) circle(2pt);

    \put(162,95){$M_{0}$}
    
    \put(115,80){$\gamma_{1}$}
    \put(30,110){$M_{1}^{(1)}$}
    \put(82,50){$M_{2}^{(1)}$}

    \put(215,80){$\gamma_{2}$}
    \put(290,110){$M_{1}^{(2)}$}
    \put(232,50){$M_{2}^{(2)}$}
\end{tikzpicture}
\caption{Illustration of loops $\gamma_{1}$ and $\gamma_{2}$}
\label{fig:two-loops}.
\end{figure}

In our heuristic process, we found that the loops $\gamma_1$ and $\gamma_2$ appeared to yield
\begin{equation}\label{eq:Permutations}
    \sigma_{\gamma_{1}} = (1)(2 \ 6)(3 \ 10 \ 5)(4 \ 8 \ 9)(7)\quad \text{and} \quad \sigma_{\gamma_{2}} = (1 \ 8 \ 9 \ 3 \ 10 \ 5 \ 7 \ 2)(4 \ 6).
\end{equation}
It remains to certify that the numerical path tracking indeed produces these permutations.

\subsection{Certification}

Although path tracking in {\tt Bertini} is robust, it is heuristic and thus one needs to utilize certification to prove that $\gamma_1$ and $\gamma_2$ yield 
smooth solution paths and 
the purported permutations~$\sigma_{\gamma_1}$ and $\sigma_{\gamma_2}$ in \eqref{eq:Permutations}, respectively.

\medskip
\begin{proof}[Proof of Theorem~\ref{thm:Monodromy}]

For $j=1,2$ and $i=1,\dots,10$, we utilized the certified homotopy tracking software \texttt{CertifiedHomotopyTracking.jl}~\cite{CertifiedHomotopyTracking} arising from the Krawczyk homotopy method from~\cite{duff-lee}
to prove that the path $X_{i}^{(j)}(a)$ defined by $F(X_{i}^{(j)}(a);\gamma_j(a)) = 0$ for $a\in[0,1]$ with $X_{i}^{(j)}(0) = x_i$ has $X_{i}^{(j)}(a)$ being a nonsingular isolated solution of 
$F(x;\gamma_j(a))=0$ and $X_i^{(j)}(1) = x_{\sigma_{\gamma_j}(i)}$
where $\sigma_{\gamma_j}$ in \eqref{eq:Permutations}. 
All scripts, input files, and certification certificates are available at the public {\tt GitHub} repository~\cite{github-repo-AMS-MRC-2025}. Finally, we execute the following code in {\tt Mathematica}:
\begin{leftbar}
\begin{verbatim}
    s1 = Cycles[{{1},{2,6},{3,10,5},{4,8,9},{7}}];
    s2 = Cycles[{{1,8,9,3,10,5,7,2},{4,6}}];
    GroupOrder[PermutationGroup[{s1,s2}]]
\end{verbatim}
\end{leftbar}
\noindent
The output is 3,628,800. Thus, $\langle \sigma_{\gamma_1},\sigma_{\gamma_2}\rangle$ is a subgroup of $S_{10}$ of size 3,628,800 = 10!. Since $\# S_{10} = 10!$, this proves $\langle \sigma_{\gamma_1},\sigma_{\gamma_2}\rangle = S_{10}$.
\end{proof}

\section{Realizable 3-tuples}\label{sec:real-secants}

We sample the parameter space \defn{$\mathscr{H}(\RR)$} $\vcentcolon = \mathrm{PGL}_{4}(\RR)/\mathrm{PGL}_{2}(\RR)$ to investigate how the number of real and nonreal solutions to~\eqref{eq:bisecant-system} varies.

\subsection{Uniform sampling} To explore $\mathscr{H}(\RR)$, we sample uniformly in 
the real projective space~$\PP^{15}(\RR)$. Since $\PP^{15}(\RR)$ can be realized as the quotient $\mathbb{S}^{15}/\{\pm 1\}$ where \defn{$\mathbb{S}^{15}$} $\subset \RR^{16}$ is the unit sphere, with antipodal points identified, we first sample $\mathbb{S}^{15}$ uniformly and then select a representative from each antipodal pair. Uniform sampling on $\mathbb{S}^{15}$ is obtained by drawing vectors from the standard Gaussian distribution in $\RR^{16}$ and normalizing them to unit length. To pass to projective space, we fix a representative for each line through the origin by requiring the last coordinate be nonnegative.

A \texttt{Python} driver script automates parameter sampling, tracking 
parameter homotopies in \texttt{Bertini}, and the 
certified classification of real versus complex solutions. Each sample produces a $3$-tuple $(n_{t},n_{p},n_{m})$ representing the number of various real secant lines. Algorithm~\ref{alg:uniform-sampling} summarizes this approach.

\begin{algorithm}[H]
\caption{Uniformly sampling parameter points in $\mathscr{H}(\RR)$ for the system $F$~\eqref{eq:bisecant-system}}\label{alg:uniform-sampling}
\begin{algorithmic}
\State \textbf{Input:} The number $N$ of parameters to be sampled.
\State \textbf{Output:} A list of 3-tuples $(n_{t},n_{p},n_{m})$ giving the count of real secant lines obtained from the $N$ parameters uniformly sampled.
\State 1. Generate $N$ sample parameter points $M_{1},\dots,M_N$ uniformly in $\PP^{15}(\RR)$.
\State \textbf{For each} $i \in  \{1,\dots, N\}$ \textbf{do}$\colon$
\State 2. Solve $F(x;M_{i})=0$. Generically there are 40 nonsingular isolated solutions.
\State 3. Identify real and complex-conjugate pairs among the solutions and pick 10 representatives $(t_{1},t_{2},s_{1},s_{2})\in \CC^{4}$ corresponding to their $(S_{2}{\times}S_{2})$-orbits.
\State 4. Classify each representative according to Proposition~\ref{prop:classify}.
\State \textbf{Return} A list of 3-tuples $(n_{t},n_{p},n_{m})$ giving the count of real secant lines.
\end{algorithmic}
\end{algorithm}

We performed 100,000 uniform random samples of $\mathscr{H}(\RR)$. Figure~\ref{fig:nonreal} shows that the distribution of real common secants is concentrated at $n_{\RR} = 2$ and $n_{\RR} = 4$. Configurations with ten real secant lines occurred only in 0.06\% of samples. 

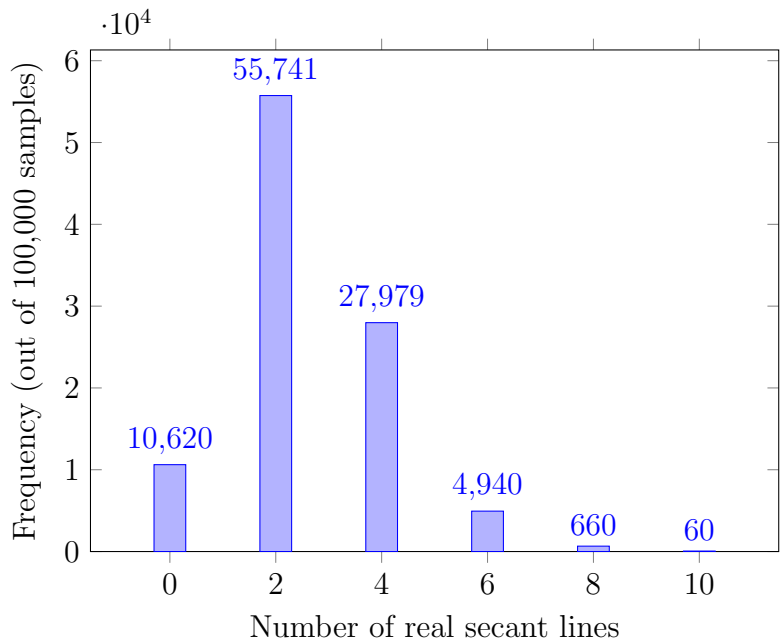
\begin{figure}[ht]
\begin{tikzpicture}
\begin{axis}[
    ybar,
    bar width=12pt,
    width=0.65\textwidth,
    height=0.5\textwidth,
    ymin=0,
    xlabel={Number of real secant lines},
    ylabel={Frequency (out of 100,000 samples)},
    xtick=data,
    enlarge x limits=0.15,
    nodes near coords,
    nodes near coords align={vertical},
]
\addplot coordinates {
(10,60)
(8,660)
(6,4940)
(4,27979)
(2,55741)
(0,10620)
};
\end{axis}
\end{tikzpicture}
\caption{Frequency distribution of the number of real common secant lines among 100,000 uniform random samples.}
\label{fig:nonreal}
\end{figure}

As expected, we observe that the induced distribution on 3-tuples $(n_{t},n_{p},n_{m})$ is highly nonuniform with a small number of 
3-tuples arising in the majority of samples.
Many admissible cases were not
realized in 100,000 samples. 
Table~\ref{tab:-25} shows the 10 most frequent 3-tuples $(n_{t},n_{p},n_{m})$ for 100,000 samples. 

\begin{table}[htbp]
\centering
\caption{Top 10 most frequent 3-tuples $(n_{t},n_{p},n_{m})$ for 100{,}000 samples.}
\label{tab:-25}
\begin{tabular}{rrrrr}
\toprule
$n_{t}$ & $n_{p}$ & $n_{m}$ & Count \\
\midrule
0 & 1 & 1 & 21,830\\
0 & 2 & 0 & 11,811 \\
0 & 0 & 0 & 10,620\\
1 & 0 & 1 & 9,960\\
0 & 0 & 2 & 7,280 \\
0 & 4 & 0 & 5,309\\
0 & 3 & 1 & 4,340\\
1 & 3 & 0 & 3,510 \\
2 & 1 & 1 & 3,390\\
1 & 1 & 0 & 3,310 \\
\bottomrule
\end{tabular}
\end{table}

\subsection{Localizing sampling}

Rather than uniformly sampling,
we aim to find additional realizable 3-tuples
by generating samples nearby a given
parameter point. Algorithm~\ref{alg:real-secants} summarizes this localized approach
which we use to 
complete the proof of our main theorem, Theorem~\ref{thm:main}.

\begin{algorithm}[htbp]
\caption{Sampling parameter points near a given point in $\mathscr{H}(\RR)$.}\label{alg:real-secants}
\begin{algorithmic}
\State \textbf{Input:} A sample point $[M_{0}] \in \mathscr{H}(\RR)$ representing parameters of $F(x;M_{0})=0$ as in~\eqref{eq:bisecant-system}, a number $\varepsilon \in (0,1)$, and a number $N$ of parameters to be sampled in $B_{\varepsilon}(M_{0})$.
\State \textbf{Output:}  A list of 3-tuples $(n_{t},n_{p},n_{m})$ giving the count of real secant lines obtained from the $N$ parameters sampled in $B_{\varepsilon}(M_{0})$.
\State 1. Generate $N$ sample parameter points 
$M_1,\dots,M_N$ in $B_{\epsilon}(M_{0})$.
\State \textbf{For each} $i\in N$ \textbf{do}:
\State 2. Run a parameter homotopy from the set of 10 solution representatives of $F(x;M_{0}) = 0$ to $F(x;M_{i})=0$.
\State 3. Classify each representative according to Proposition~\ref{prop:classify}.
\State \textbf{Return} A list of 3-tuples $(n_{t},n_{p},n_{m})$ giving the count of real secant lines.
\end{algorithmic}
\end{algorithm}

\medskip
\begin{proof}[Proof of Theorem~\ref{thm:main}]
By Proposition~\ref{prop:admissible-s}, there are a total of 161 admissible $3$-tuples $(n_{t},n_{p},n_{m})$. By sampling a large number of initial parameter points in $\mathscr{H}(\RR)$, and repeatedly performing Algorithm~\ref{alg:real-secants} on points in our sample whose 3-tuples $(n_{t},n_{p},n_{m})$ are located near admissible but not yet 
realized 3-tuples, we 
first used heuristic 
solving to determine
parameters $M \in \mathscr{H}(\RR)$ 
which realize an admissible 3-tuple
that has yet to be realized.
This process generated 
parameters for 
128 out of the 161 admissible 3-tuples.

For each of these parameters, we proved realizability by certifying approximate solutions using \texttt{alphaCertified}. The certified data, including rational representations of the solutions, are archived in the {\tt GitHub} repository~\cite{github-repo-AMS-MRC-2025}.

Table~\ref{tab:3-tuples} in the Appendix shows the lists of realized 3-tuples 
as well as admissible 3-tuples that have not been realized, which is also plotted
in~Figure~\ref{fig:3-tuples-plot}.
An inspection of the data confirms that all admissible 3-tuples satisfying the hypothesis of the theorem have been realized and certified.
\end{proof}

\subsection{Totally real secant lines}\label{subsec:totally-real}

The following subsection records examples constituting the proof of Theorem~\ref{thm:totally-real}.
Since Example~\ref{ex:10TotallyReal} showed the existence of two real twisted cubics with $10$ totally real secant lines,
the remaining cases are considered below.

\begin{example}[9 totally real lines] Taking the following parameters as exact,
\begin{equation*}
M  =
\begin{pmatrix}
\phantom{-}1       & 0      & 0       & 0\\
\phantom{-}0.7519  & 0.1524 & \phantom{1}2.8796  & \phantom{1}7.6158\\
-1.0069 & 3.7048 & 15.1121 & \phantom{1}7.9388\\
\phantom{-}1.2271  & 0.1795 & \phantom{1}3.6345  & 10.6654
\end{pmatrix},
\end{equation*}
there are 40 isolated solutions in $\CC^{4}$ to $F(x;M) = 0$ arising in 10 groups of four as in~\eqref{eq:S2xS2-action}. From these, there are 9 representatives that are totally real. Truncated to four decimal places, one representative from each of the 9 groups is given below.

\begin{picture}(220,140)
\put(-10,60){$
\begin{array}{rrrrrr}
(&
-0.0973,&
1.2278,&
0.2872,&
17.9477&
)\\
(&
-0.0477,&
1.2050,&
-0.5659,&
13.7519&
)\\
(&
-0.0357,&
1.2281,&
-0.5654,&
0.2871&
)\\
(&
-1.1977,&
1.2280,&
-0.7631,&
0.2883&
)\\
(&
-1.1961,&
1.2023,&
-1.4348,&
-0.7619&
)\\
(&
-1.1699,&
1.2279,&
-1.5291,&
0.2883&
)\\
(&
-1.4968,&
0.1556,&
-0.3699,&
0.0476&
)\\
(&
-1.1959,&
-0.0525,&
-0.7887,&
-0.2850&
)\\
(&
-1.1954,&
-0.0624,&
-0.7886,&
-0.0035&
)\\
\end{array}
$}
\put(250,-10){\includegraphics[scale=0.3]{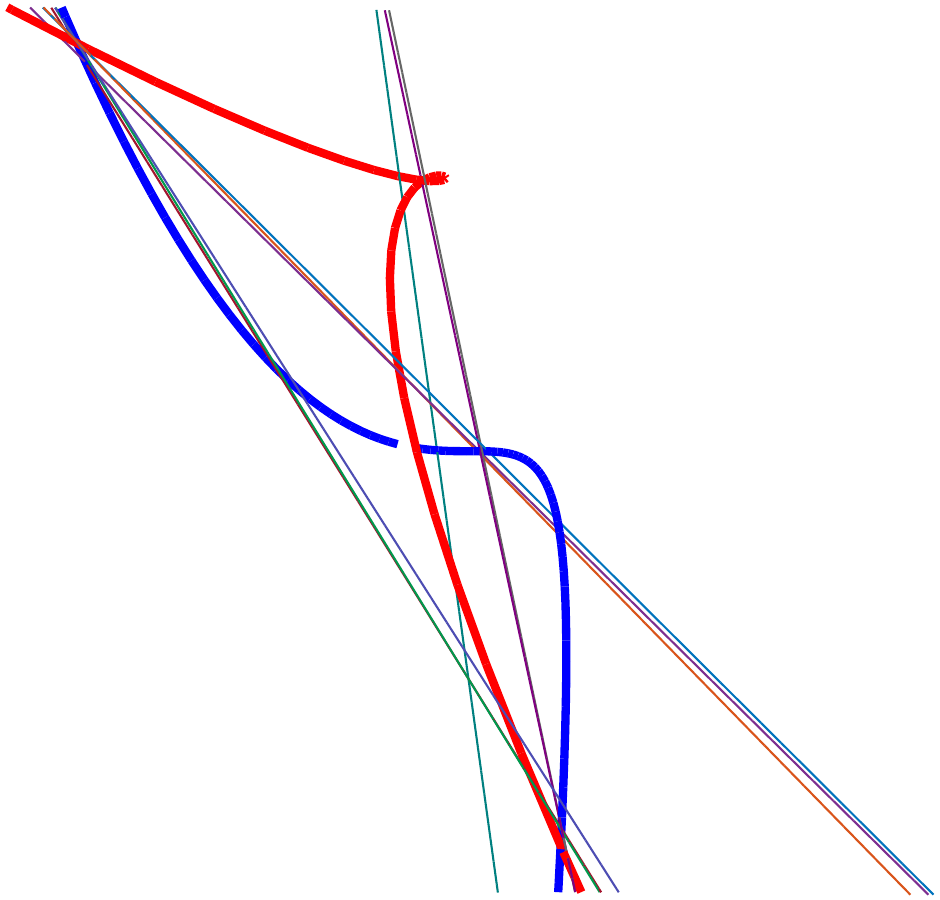}}
\end{picture}

\noindent These data correspond to the 3-tuple $(9,1,0)$.
\end{example}

\begin{example}[8 totally real lines] Taking the following parameters as exact,
\begin{equation*}
M  =
\begin{pmatrix}
\phantom{-}1      & 0      & 0      & \phantom{1}0\\
\phantom{-}0.7712 & 1.0684 & \phantom{1}4.0153 & 6.8183\\
-0.3876 & 3.8568 & 13.8269 & 6.0562\\
\phantom{-}0.7904 & 1.0239 & \phantom{1}4.9333 & 9.2222
\end{pmatrix},
\end{equation*}
there are 40 isolated solutions in $\CC^{4}$ to $F(x;M) = 0$ arising in 10 groups of four as in~\eqref{eq:S2xS2-action}. From these, there are 8 representatives that are totally real. Truncated to four decimal places, one representative from each of the 8 groups is given below.

\begin{picture}(220,130)
\put(-10,60){$
\begin{array}{rrrrrr}
(&
0.2845,&
0.9851,&
0.2431,&
3.5926&
)\\
(&
0.1204,&
1.0904,&
-0.4944,&
4.3169&
)\\
(&
-1.2920,&
0.9966,&
-1.3992,&
0.1953&
)\\
(&
0.0682,&
1.0255,&
-0.4963,&
0.2388&
)\\
(&
-1.2405,&
1.1041,&
-1.6107,&
-0.6670&
)\\
(&
-1.1738,&
1.0127,&
-0.6361,&
0.2009&
)\\
(&
-1.2561,&
0.1874,&
-0.9975,&
-0.0484&
)\\
(&
-1.2469,&
0.1460,&
-0.9924,&
-0.2785&
)\\
\end{array}
$}
\put(250,-10){\includegraphics[scale=0.3]{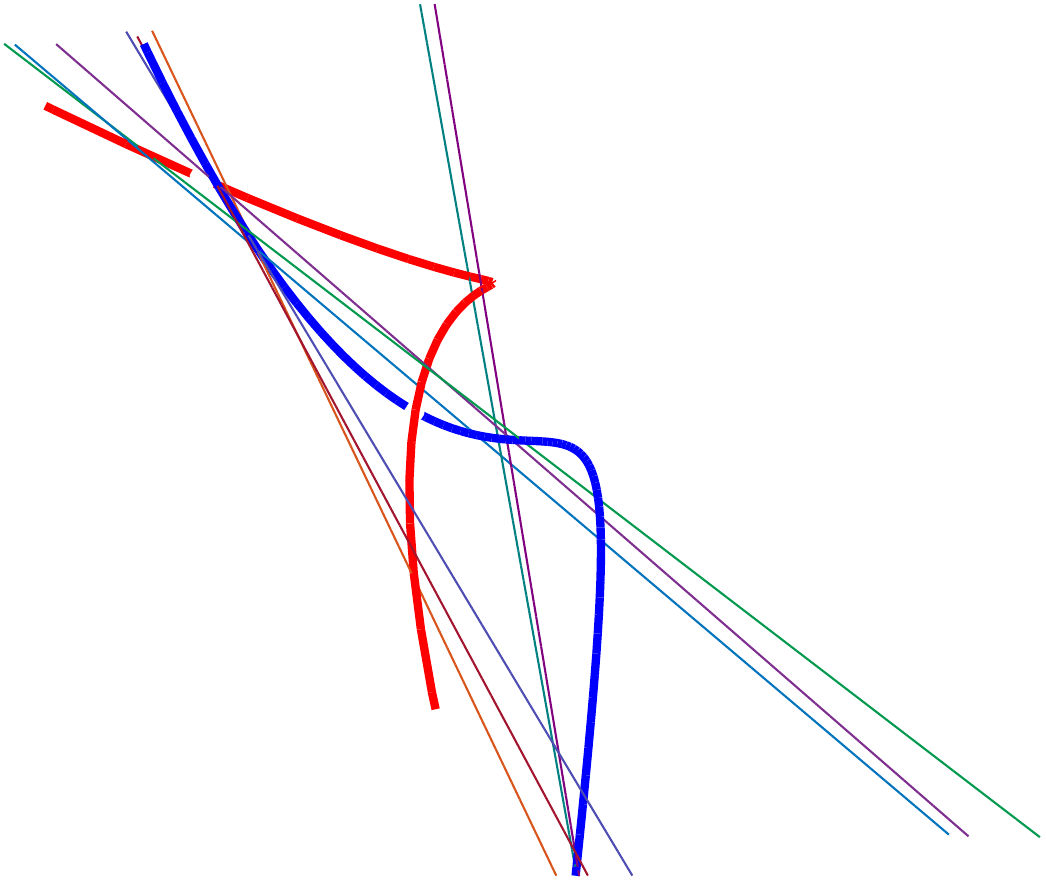}}
\end{picture}

\noindent These data correspond to the 3-tuple $(8,0,0)$.
\end{example}

\begin{example}[7 totally real lines] Taking the following parameters as exact,
\begin{equation*}
M  =
\begin{pmatrix}
\phantom{-}1      & 0      & 0      & \phantom{1}0\\
\phantom{-}1.5875 & 1.0292 & \phantom{1}2.9179 & \phantom{1}8.5023\\
-2.2410 & 3.8004 & 16.0167 & \phantom{1}6.6939\\
\phantom{-}1.7921 & 0.5681 & \phantom{1}3.0176 & 10.5398
\end{pmatrix},
\end{equation*}
there are 40 isolated solutions in $\CC^{4}$ to $F(x;M) = 0$ arising in 10 groups of four as in~\eqref{eq:S2xS2-action}. From these, there are 7 representatives that are totally real. Truncated to four decimal places, one representative from each of the 7 groups is given below.

\begin{picture}(220,130)
\put(-10,60){$
\begin{array}{rrrrrr}
(&
-0.0294,&
1.0400,&
-0.6271,&
0.4619&
)\\
(&
-0.0263,&
1.1190,&
-0.6270,&
5.0823&
)\\
(&
-1.2378,&
0.8883,&
-1.2100,&
0.2845&
)\\
(&
-1.1210,&
0.9786,&
-0.7724,&
0.3145&
)\\
(&
-1.1058,&
1.1389,&
-2.0696,&
-0.7637&
)\\
(&
-1.0642,&
0.0213,&
-0.6763,&
0.0904&
)\\
(&
-1.1036,&
-0.0243,&
-0.6875,&
-0.5651&
)\\
\end{array}
$}
\put(250,-10){\includegraphics[scale=0.3]{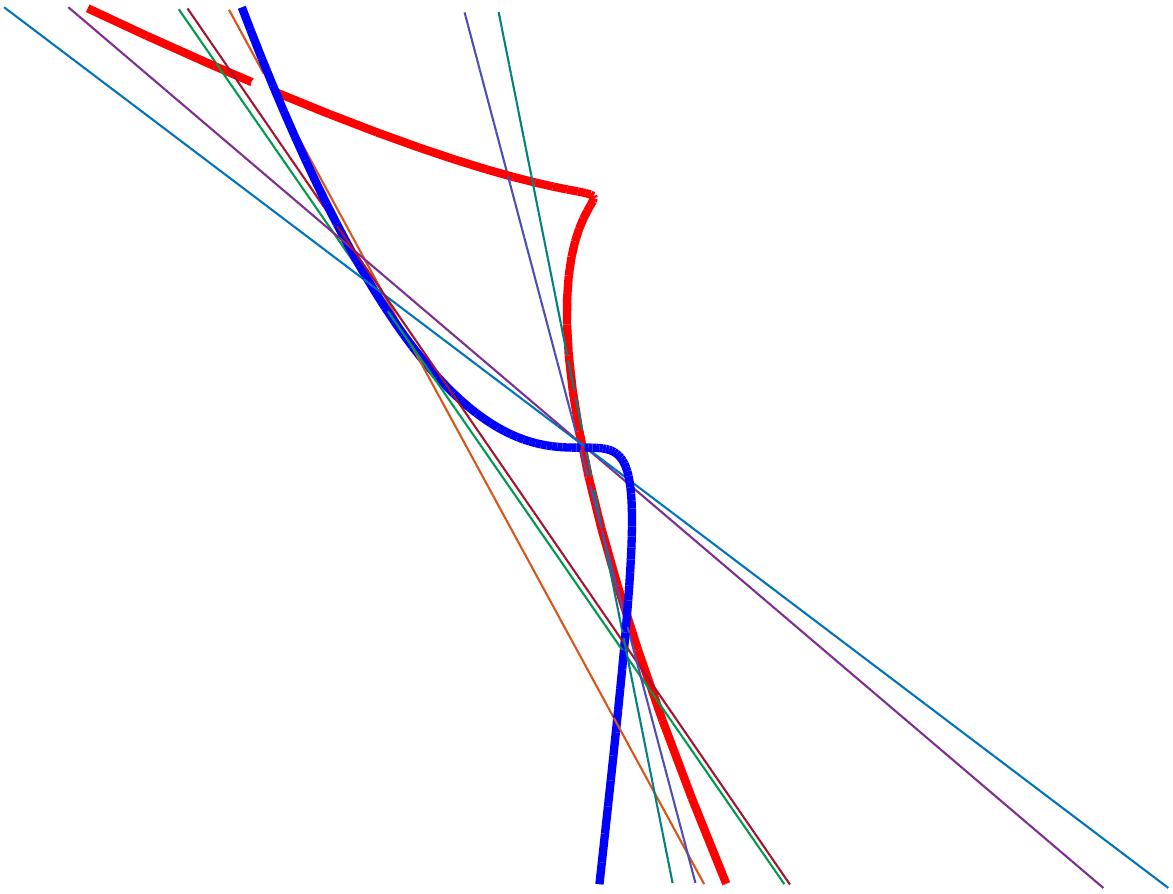}}
\end{picture}

\noindent These data correspond to the 3-tuple $(7,3,0)$.
\end{example}

\begin{example}[6 totally real lines] Taking the following parameters as exact,
\begin{equation}
M  =
\begin{pmatrix}
1 & 0 & \phantom{1}0 & \phantom{1}0\\
1.0321 & 1.8134 & \phantom{1}4.5132 & \phantom{1}7.8117\\
0.3835& 2.8268 & 14.8081 & \phantom{1}5.2579\\
1.4025& 2.0517 & \phantom{1}6.1857 & 10.9519
\end{pmatrix},
\end{equation}
there are 40 isolated solutions in $\CC^{4}$ to $F(x;M) = 0$ arising in 10 groups of four as in~\eqref{eq:S2xS2-action}. From these, there are 6 representatives that are totally real. Truncated to four decimal places, one representative from each of the 6 groups is given below.

\begin{picture}(220,130)
\put(-10,60){$
\begin{array}{rrrrrr}
(&
1.1362,&
-1.2222,&
0.1631,&
-2.3167&
)\\
(&
1.2304,&
-1.1314,&
-2.9760,&
-0.4230&
)\\
(&
1.1713,&
-0.9253,&
0.1881,&
-0.3896&
)\\
(&
1.1766,&
-0.2196,&
-0.3302,&
0.2450&
)\\
(&
1.2165,&
-0.0779,&
2.7098,&
-0.3231&
)\\
(&
1.1161,&
0.1145,&
0.2720,&
2.1508&
)
\end{array}
$}
\put(280,-10){\includegraphics[scale=0.3]{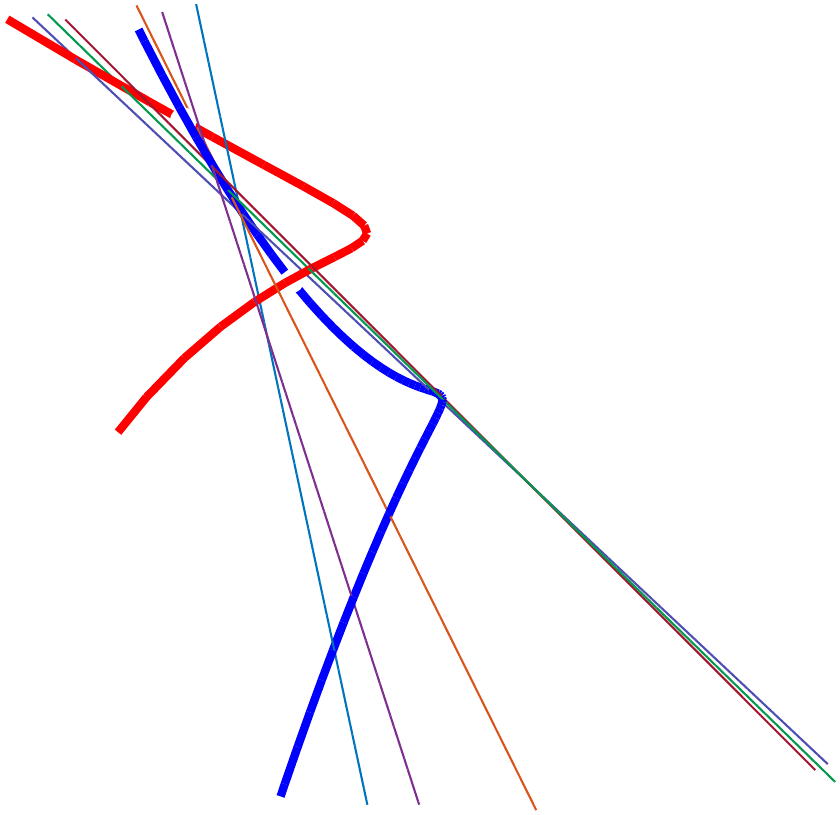}}
\end{picture}

\noindent These data correspond to the 3-tuple $(6,0,0)$.
\end{example}

\begin{example}[5 totally real lines] Taking the following parameters as exact,
\begin{equation*}
M  =
\begin{pmatrix}
\phantom{-}1      & 0      & \phantom{1}0      & \phantom{1}0\\
\phantom{-}2.0150 & 1.9005 & \phantom{1}2.2737 & \phantom{1}9.2034\\
-0.0362 & 2.0729 & 14.6727 & \phantom{1}7.2270\\
\phantom{-}1.8364 & 1.7414 & \phantom{1}4.0076 & 10.9568
\end{pmatrix},
\end{equation*}
there are 40 isolated solutions in $\CC^{4}$ to $F(x;M) = 0$ arising in 10 groups of four as in~\eqref{eq:S2xS2-action}. From these, there are 5 representatives that are totally real. Truncated to four decimal places, one representative from each of the 5 groups is given below.

\begin{picture}(220,150)
\put(-10,70){$
\begin{array}{rrrrrr}
(&
-0.0018,&
1.0256,&
-0.4033,&
0.4235&
)\\
(&
-0.8601,&
1.0250,&
-0.4045,&
0.2321&
)\\
(&
-1.0606,&
1.0271,&
-1.7495,&
-0.4058&
)\\
(&
-1.1321,&
0.8514,&
-1.3372,&
0.1018&
)\\
(&
0.1450,&
1.0265,&
-0.4033,&
3.3072&
)\\
\end{array}
$}
\put(280,-10){\includegraphics[scale=0.3]{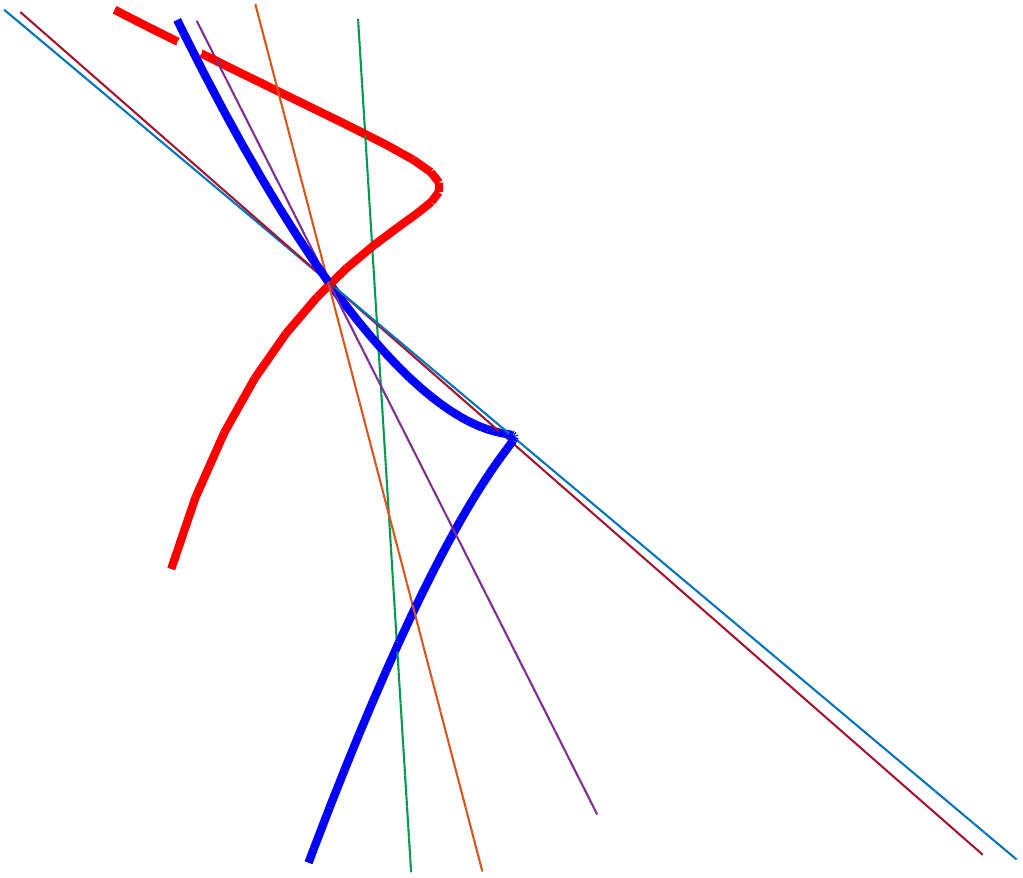}}
\end{picture}

\noindent These data correspond to the 3-tuple $(5,1,0)$.
\end{example}

\begin{example}[4 totally real lines] Taking the following parameters as exact,
\begin{equation}
M  =
\begin{pmatrix}
\phantom{-}1       & 0      & 0       & 0  \\
\phantom{-}0.2256  & 1.2068 & 5.2039  & 7.9386\\
-1.1289 & 3.3178 & 14.3586 & 5.7655\\
\phantom{-}1.1670  & 3.1542 & 2.6854  & 9.8156
\end{pmatrix},
\end{equation}
there are 40 isolated solutions in $\CC^{4}$ to $F(x;M) = 0$ arising in 10 groups of four as in~\eqref{eq:S2xS2-action}. From these, there are 4 representatives that are totally real. Truncated to four decimal places, one representative from each of the 4 groups is given below.

\begin{picture}(220,130)
\put(-10,60){$
\begin{array}{rrrrrr}
(&
0.0458,&
-2.1130,&
-0.6338,&
0.0915&
)\\
(&
-0.0351,&
-1.8916,&
0.0810,&
-0.7441&
)\\
(&
1.3628,&
-1.7109,&
0.3347,&
-0.8727&
)\\
(&
1.2607,&
0.2935,&
-0.4365,&
0.3849&
)
\end{array}
$}
\put(280,-10){\includegraphics[scale=0.3]{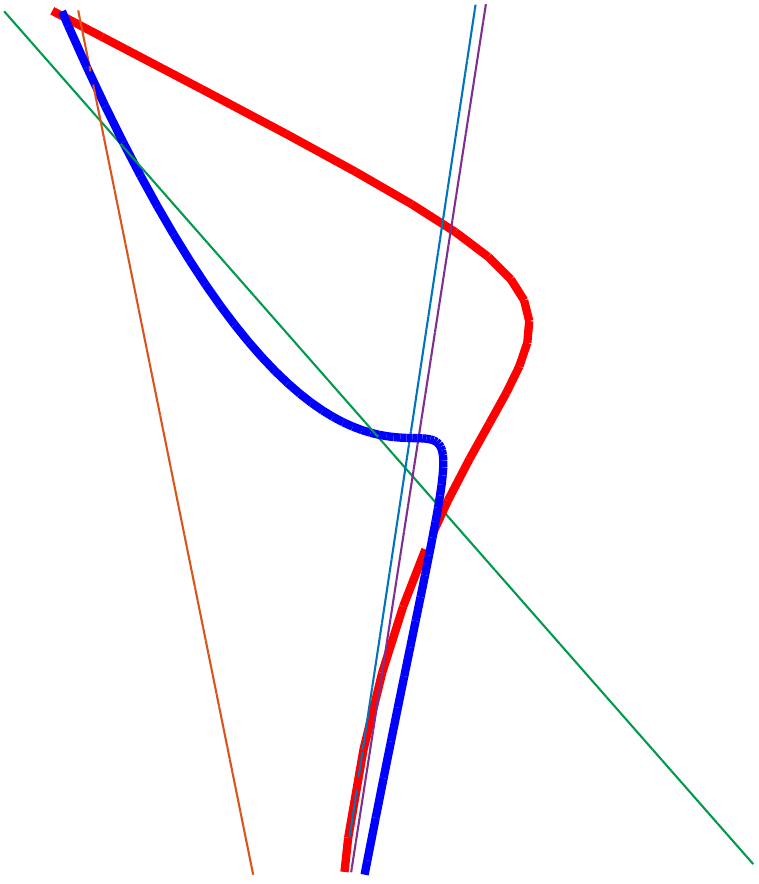}}
\end{picture} 

\noindent These data correspond to the 3-tuple $(4,1,1)$.
\end{example}

\begin{example}[3 totally real lines] Taking the following parameters as exact,
\begin{equation*}
M  =
\begin{pmatrix}
\phantom{-}1       & 0      & \phantom{1}0       & 0\\  
-0.7792 & 0.4435 & \phantom{1}5.0652  & 9.1369\\
\phantom{-}1.2123  & 3.6701 & 12.5914 & 7.0705\\
-0.2896 & 2.7162 & \phantom{1}3.8465  & 9.6116
\end{pmatrix},
\end{equation*}
there are 40 isolated solutions in $\CC^{4}$ to $F(x;M) = 0$ arising in 10 groups of four as in~\eqref{eq:S2xS2-action}. From these, there are 3 representatives that are totally real. Truncated to four decimal places, one representative from each of the 3 groups is given below.

\begin{picture}(220,130)
\put(-10,60){$
\begin{array}{rrrrrr}
(&
-1.4749,&
2.0198,&
0.2309,&
-0.6487&
)\\
(&
-1.4330,&
1.0056,&
-0.5707,&
-0.8760&
)\\
(&
1.2187,&
-0.9380,&
-2.1231,&
-0.1383&
)
\end{array}
$}
\put(290,-10){\includegraphics[scale=0.3]{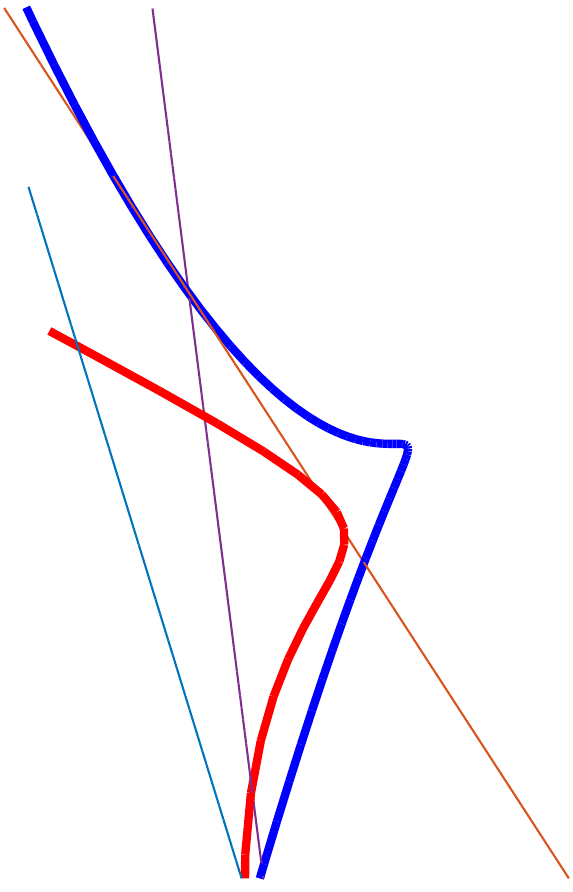}}
\end{picture}

\noindent These data correspond to the 3-tuple $(3,0,1)$.
\end{example}

\begin{example}[2 totally real lines] Taking the following parameters as exact,
\begin{equation*}
M  =
\begin{pmatrix}
\phantom{-}1      & 0      & \phantom{1}0      & 0\\
\phantom{-}1.7547 & 3.4027 & \phantom{1}1.7429 & 7.6041\\
-1.0823 & 4.1538 & 12.0008 & 6.6773\\
\phantom{-}2.6459 & 2.3080 & \phantom{1}3.5738 & 8.8431
\end{pmatrix},
\end{equation*}
there are 40 isolated solutions in $\CC^{4}$ to $F(x;M) = 0$ arising in 10 groups of four as in~\eqref{eq:S2xS2-action}. From these, there are 2 representatives that are totally real. Truncated to four decimal places, one representative from each of the 2 groups is given below.

\begin{picture}(220,130)
\put(-10,60){$
\begin{array}{rrrrrr}
(&
-0.0307,&
1.1047,&
0.5412,&
6.8077&
)\\
(&
-0.6559,&
1.2094,&
-4.1860,&
0.4012&
)\\
\end{array}
$}
\put(280,-10){\includegraphics[scale=0.3]{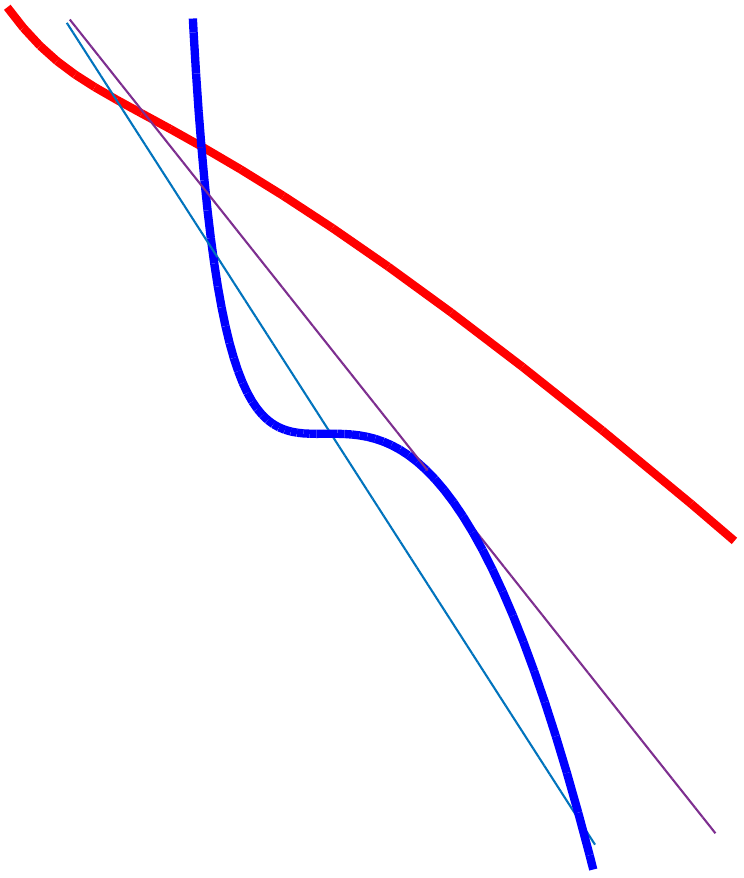}}
\end{picture}

\noindent These data correspond to the 3-tuple $(2,0,0)$.
\end{example}

\begin{example}[1 totally real line] Taking the following parameters as exact,
\begin{equation*}
M  =
\begin{pmatrix}
\phantom{-}1       & 0      & \phantom{1}0       & \phantom{1}0\\
\phantom{-}0.0983  & 4.0479 & \phantom{1}2.2629  & \phantom{1}6.9867\\
-0.0267 & 4.2539 & 12.6067 & \phantom{1}5.2851\\
-1.3234 & 2.3696 & \phantom{1}3.5350  & 10.0737
\end{pmatrix}
\end{equation*}
there are 40 isolated solutions in $\CC^{4}$ to $F(x;M) = 0$ arising in 10 groups of four as in~\eqref{eq:S2xS2-action}. From these, there is 1 representative that is totally real. Truncated to four decimal places, the representative is given below.

\begin{picture}(220,130)
\put(-10,60){$
\begin{array}{rrrrrr}
(&
-1.0062,&
1.2979,&
-0.5469,&
-3.3602&
)
\end{array}
$}
\put(310,-10){\includegraphics[scale=0.3]{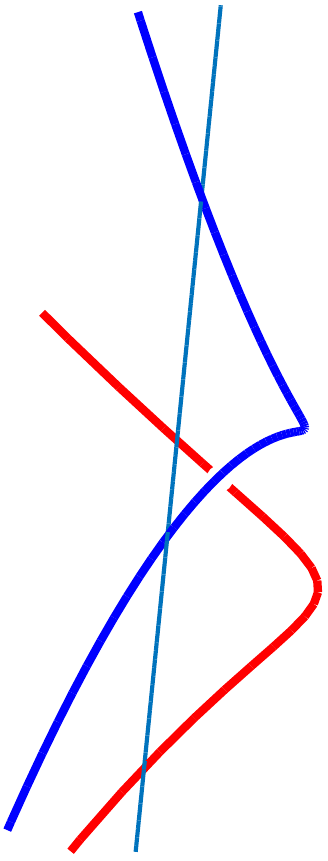}}
\end{picture}

\noindent These data correspond to the 3-tuple $(1,0,3)$.
\end{example}

\begin{example}[0 totally real lines] Taking the following parameters as exact,
\begin{equation*}
M  =
\begin{pmatrix}
\phantom{-}1       & 0      & \phantom{1}0       & 0\\
\phantom{-}1.6872  & 1.5909 & \phantom{1}2.7170  & 7.8740\\
-1.0519 & 3.4185 & 14.1275 & 6.8633\\
\phantom{-}2.7208  & 1.3832 & \phantom{1}4.2118  & 6.8602
\end{pmatrix}
\end{equation*}
there are 40 isolated solutions in $\CC^{4}$ to $F(x;M) = 0$ arising in 10 groups of four as in~\eqref{eq:S2xS2-action}. From these, there are no representatives that are totally real. 

\begin{picture}(200,130)
\put(180,10){\includegraphics[scale=0.3]{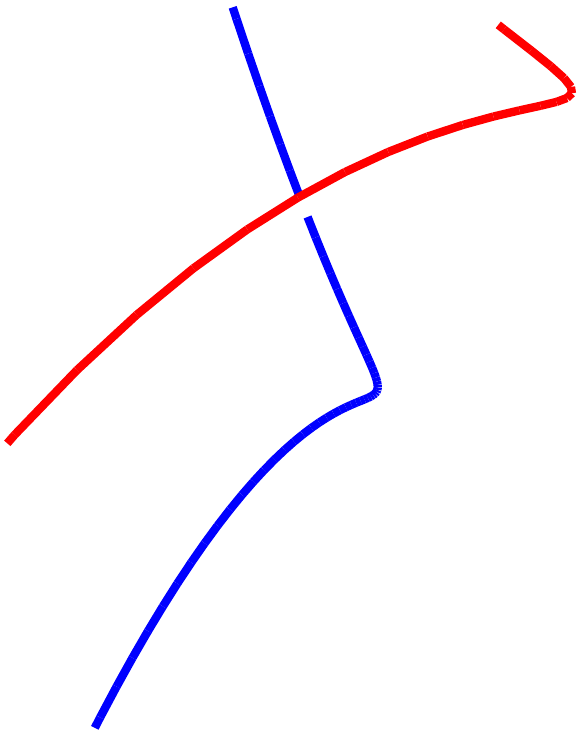}}
\end{picture}

\noindent These data correspond to the 3-tuple $(0,0,0)$.
\end{example}

\section{Discussion and Open Problems}\label{sec:discussion-and-open-problems}

Over the complex numbers, Theorem~\ref{thm:10-complex-secant-lines} provides the well-known count of ten common secants to two general twisted cubic curves. 
We formulated and investigated
the real analogue aiming to 
determine the possible collections of ten common secant lines to a pair of real twisted cubics.
In particular, we classified 
real secants as totally real, partially real, or minimally real, and used combinatorics
to determine that there are a total of 161 admissible 3-tuples $(n_{t},n_{p},n_{m})$. We then used \texttt{Python} scripts with \texttt{Bertini} to explore the parameter space $\mathscr{H}(\RR)$ to realize 
128 of the 161 admissible 3-tuples $(n_{t},n_{p},n_{m})$. We used \texttt{alphaCertified} 
to certify the solutions associated with the realizable 3-tuples found, yielding Theorem~\ref{thm:main}.
A particular case is formulated in
Theorem~\ref{thm:totally-real},
which posits that any number from $0$ to $10$
of totally real secant lines is possible.

\medskip
We outline a few open problems naturally arising from our investigation.

\begin{problem}
Determine the full set of realizable 3-tuples in Table~\ref{tab:3-tuples}. Our data~\cite{github-repo-AMS-MRC-2025} show 128 out of 161 admissible 3-tuples have been realized.
\end{problem}

\begin{problem}
Consider the problem of determining the possible collections of common secants to other rational normal curves in $\PP^{n}$.  
\end{problem}

\section{Acknowledgments}

We thank the American Mathematical Society for its support, and the organizers of the 2025 AMS Mathematics Research Communities on Real Numerical Algebraic Geometry for setting up this collaboration. We are also grateful to the mentors and participants for valuable discussions and feedback throughout this project. 

\FloatBarrier

\bibliographystyle{abbrv}
\bibliography{paper}

\clearpage
\appendix
\section{Tables of 3-tuples}\label{sec:Table}

\begin{table}[htbp]
\centering
\caption{Realized 3-tuples and Admissible 
and not Realized 3-tuples.}

\setlength{\tabcolsep}{6pt}
\renewcommand{\arraystretch}{1.2}

\begin{minipage}[t]{0.60\textwidth}
\vspace{0pt}
\centering

\begin{tabular}{llll}
\toprule
\multicolumn{4}{c}{Realized} \\
\midrule
$(0,0,0)$ & $(0,0,2)$ & $(0,0,4)$ & $(0,0,6)$ \\
$(0,0,8)$ & $(0,1,1)$ & $(0,1,3)$ & $(0,1,5)$ \\
$(0,1,7)$ & $(0,2,0)$ & $(0,2,2)$ & $(0,2,4)$ \\
$(0,2,6)$ & $(0,3,1)$ & $(0,3,3)$ & $(0,3,5)$ \\
$(0,4,0)$ & $(0,4,2)$ & $(0,4,4)$ & $(0,5,1)$ \\
$(0,5,3)$ & $(0,6,0)$ & $(0,6,2)$ & $(0,7,1)$ \\
$(0,8,0)$ & $(1,0,1)$ & $(1,0,3)$ & $(1,0,5)$ \\
$(1,1,0)$ & $(1,1,2)$ & $(1,1,4)$ & $(1,2,1)$ \\
$(1,2,3)$ & $(1,2,5)$ & $(1,3,0)$ & $(1,3,2)$ \\
$(1,3,4)$ & $(1,4,1)$ & $(1,4,3)$ & $(1,5,0)$ \\
$(1,5,2)$ & $(1,6,1)$ & $(1,7,0)$ & $(2,0,0)$ \\
$(2,0,2)$ & $(2,0,4)$ & $(2,0,6)$ & $(2,0,8)$ \\
$(2,1,1)$ & $(2,1,3)$ & $(2,1,5)$ & $(2,1,7)$ \\
$(2,2,0)$ & $(2,2,2)$ & $(2,2,4)$ & $(2,2,6)$ \\
$(2,3,1)$ & $(2,3,3)$ & $(2,3,5)$ & $(2,4,0)$ \\
$(2,4,2)$ & $(2,4,4)$ & $(2,5,1)$ & $(2,5,3)$ \\
$(2,6,0)$ & $(3,0,1)$ & $(3,0,3)$ & $(3,0,5)$ \\
$(3,0,7)$ & $(3,1,0)$ & $(3,1,2)$ & $(3,1,4)$ \\
$(3,1,6)$ & $(3,2,1)$ & $(3,2,3)$ & $(3,2,5)$ \\
$(3,3,0)$ & $(3,3,2)$ & $(3,3,4)$ & $(3,4,1)$ \\
$(3,4,3)$ & $(3,5,0)$ & $(4,0,0)$ & $(4,0,2)$ \\
$(4,0,4)$ & $(4,1,1)$ & $(4,1,3)$ & $(4,1,5)$ \\
$(4,2,0)$ & $(4,2,2)$ & $(4,2,4)$ & $(4,3,1)$ \\
$(4,3,3)$ & $(4,4,0)$ & $(5,0,1)$ & $(5,0,3)$ \\
$(5,0,5)$ & $(5,1,0)$ & $(5,1,2)$ & $(5,1,4)$ \\
$(5,2,1)$ & $(5,2,3)$ & $(5,3,0)$ & $(5,3,2)$ \\
$(5,4,1)$ & $(5,5,0)$ & $(6,0,0)$ & $(6,0,2)$ \\
$(6,0,4)$ & $(6,1,1)$ & $(6,1,3)$ & $(6,2,0)$ \\
$(6,2,2)$ & $(6,3,1)$ & $(6,4,0)$ & $(7,0,1)$ \\
$(7,0,3)$ & $(7,1,0)$ & $(7,1,2)$ & $(7,2,1)$ \\
$(7,3,0)$ & $(8,0,0)$ & $(8,0,2)$ & $(8,1,1)$ \\
$(8,2,0)$ & $(9,0,1)$ & $(9,1,0)$ & $(10,0,0)$ \\
\bottomrule
\end{tabular}
\end{minipage}
\hfill
\begin{minipage}[t]{0.34\textwidth}
\vspace{0pt}
\centering

\begin{tabular}{lll}
\toprule
\multicolumn{3}{c}{Admissible and not Realized} \\
\midrule
$(0,0,10)$ & $(0,1,9)$ & $(0,2,8)$ \\
$(0,3,7)$ & $(0,4,6)$ & $(0,5,5)$ \\
$(0,6,4)$ & $(0,7,3)$ & $(0,8,2)$ \\
$(0,9,1)$ & $(0,10,0)$ & $(1,0,9)$ \\
$(1,0,7)$ & $(1,1,8)$ & $(1,1,6)$ \\
$(1,2,7)$ & $(1,3,6)$ & $(1,4,5)$ \\
$(1,5,4)$ & $(1,6,3)$ & $(1,7,2)$ \\
$(1,8,1)$ & $(1,9,0)$ & $(2,6,2)$ \\
$(2,7,1)$ & $(2,8,0)$ & $(3,5,2)$ \\
$(3,6,1)$ & $(3,7,0)$ & $(4,0,6)$ \\
$(4,4,2)$ & $(4,5,1)$ & $(4,6,0)$ \\
\bottomrule
\end{tabular}
\end{minipage}

\label{tab:3-tuples}
\end{table}

\end{document}